\newtheorem{theorem}{Theorem}
\theoremstyle{plain}
\newtheorem{corollary}{Corollary}
\newtheorem{definition}{Definition}
\numberwithin{equation}{section}
\begin{document}
\title[]{The investigation of some special curves in the pseudo-Galilean
4-space $G_{1}^{4} $}
\author{Fatma Almaz}
\address{department of mathematics, faculty of arts and sciences, batman
university, batman/ t\"{u}rk\.{ı}ye}
\email{fatma.almaz@batman.edu.tr}
\author{Handan \"{O}ztek\.{ı}n}
\address{department of mathematics, faculty of science, firat university,
elaz\i \u{g}/ t\"{u}rk\.{ı}ye}
\email{handanoztekin@gmail.com}
\thanks{This paper is in final form and no version of it will be submitted
for publication elsewhere.}
\subjclass[2000]{53A35, 53B30, 53C40.}
\keywords{pseudo-Galilean 4-space $\mathbf{G}_{1}^{4}$, admissible curve,
position curve, osculating curve, normal curve, rectifying curve, slant
helices, spherical curves.}

\begin{abstract}
In this paper, we investigate and characterize an arbitrary admissible curve in terms of its curvature functions in the pseudo-Galilean space $G_{1}^{4}$%
. We also give some special curves in four-dimensional pseudo-Galilean space and their characterizations, such as position curve, osculating curve, normal
curve, rectifying curve, slant helices, and spherical curves. Moreover, this study provides classifications of admissible curves in $G_{1}^{4}$. That is,
the admissible rectifying curves lying fully in the $G_{1}^{4}$ are expressed and given necessary and sufficient conditions for such a curve to be an admissible rectifying curve, osculating curve, normal curve, slant helices and spherical curves in $G_{1}^{4}$.
\end{abstract}

\maketitle


\section{Introduction}

An introduction to the fundamental concepts of curves in pseudo-Galilean
4-space will be given. $G_{1}^{4}$ space is a special type of geometry that
has a metric structure different from four-dimensional Euclidean space. The
study of curves in this space plays an important role in differential
geometry and related physical theories. It provides a foundation for further
studies of curves in $G_{1}^{4}$. The unique metric structure of this space
gives rise to properties in the behaviour of curves that differ from those
in Euclidean space.

Pseudo-Galilean space $G_{1}^{4}$ is important in physics, particularly in
the context of classical mechanics and Galilean relativity. Unlike the
space-time of special relativity, $G_{1}^{4}$ space preserves the concept of
absolute time. This means that time flows uniformly for observers in
different inertial reference frames. The fundamental importance of $%
G_{1}^{4} $ can be stated as follows:

As the mathematical framework for Galilean relativity, the space $G^{4}$
provides the mathematical foundation of Newtonian mechanics and Galilean
relativity. Galilean transformations are compatible with the principles of
classical physics, where velocities are simply added and time is universal.

If this space is to be expressed as a classical field theory, geometry or
related structures in $G_{1}^{4}$ can be used in some classical field
theories, particularly in fields such as super-fluidity and condensed matter
physics. In these fields, where the speed of light is finite, relativistic
effects can be neglected, and Galilean symmetry offers a more suitable
approach.

In physical terms, $G_{1}^{4}$ can be viewed as the limiting case of special
relativity, where the speed of light goes to infinity. This provides a
conceptual bridge to understanding how classical mechanics can be derived
from relativity.

The unique geometric structure of $G_{1}^{4}$, as expressed in studies of
Differential Geometry and particularly in mathematical physics, offers an
interesting avenue for the study of curves, surfaces, and other geometric
objects. These studies can form the basis for understanding more complex
space-time structures.

In summary, pseudo-Galilean space is a mathematical structure that embodies
absolute time and the simple velocity addition rule, forming the foundation
of classical physics and Galilean relativity. While Minkowski space is more
dominant in modern physics, $G_{1}^{4}$ remains a valuable tool for
understanding classical limits and modelling certain physical systems.

In classical differential geometry, the discovery of the moving
Frenet-Serret frame over regular curves paved the way for diverse studies on
different types of curves. Of course, with the help of ordinary differential
equations, researchers have investigated curves such as rectified curves,
normal curves, osculating spherical curves, involute-evolute curves,
Bertrand curves, helices, slant helices, and so on.

Given the general definitions and classifications of curves in
Pseudo-Galilean 4-space, let's briefly discuss some of the special and
important types of curves in this space. In summary, we'll briefly discuss
rectifying, osculating, normal, helical, and spherical curves. These curve
types are used in differential geometry to understand the local and global
properties of curves and while expressing these types of curves in $%
G_{1}^{4} $, the importance of taking into account the metric structure of
the space and the inner product derived from this metric cannot be denied.

Let's briefly provide some definitions about the types of curves we examine
in our study. Rectifying curves, for a curve to be rectifying means that its
tangent vector always lies within a fixed plane (or hyperplane). In $%
G_{1}^{4}$, if the span (linear extent) of a curve's tangent vector lies
within a fixed 2D subspace, that curve can be called rectifying. Osculating
curves, the term osculating plane is often used to describe the plane or
subspace closest to a curve. In classical differential geometry, the
osculating plane is formed by tangent and normal vectors. In $G_{1}^{4}$,
the osculating subspace of a curve is generally the subspace spanned by
tangent, normal, and bi-normal vectors. Osculating curves are curves for
which osculating subspaces have certain properties. Normal curves, the
normal of a curve generally refers to vectors perpendicular to the tangent
vector. In $G_{1}^{4}$, normal vectors are vectors perpendicular to the
tangent vector according to the metric in $G_{1}^{4}$. Normal curves can be
defined as curves in which all normal vectors remain within a fixed
subspace. A helix is a type of curve that rotates around a fixed axis.
In $G_{1}^{4}$, a helix can be defined as a curve in which the angle between
the tangent vector and a fixed directrix vector remains constant. Due to the
nature of the metric $G_{1}^{4}$, this concept of "angle" and the "fixed
directrix vector" can be interpreted in different ways, resulting in
different types of helices. Spherical curves are curves located on the
surface of a sphere around a fixed point. In $G_{1}^{4}$, the concept of a
"sphere" is defined as the set of points located at a fixed "distance" from
a fixed point according to metric given in $G_{1}^{4}$. This distance can be
space-like, time-like, or null, giving rise to different types of "spheres"
and, consequently, different types of spherical curves.

Examining these curves is important for understanding the geometry of the
space $G_{1}^{4}$ and the nature of motion in this space.

From the differential geometric point of view, the study of curves in $G_{3}$
has its own interest. Many interesting results on curves in $G_{3}$ and $%
G_{1}^{3}$ have been obtained by many authors \cite{10,11,15}. In \cite{1,20}, Frenet-Serret frame of a curve are construction by the authors in the
Galilean 4-space and pseudo Galilean 4-space and obtained the Frenet-Serret
equations. Some studies have been carried out different curves in Galilean
three and four spaces \cite{16,17,18,21}. Furthermore, more recent research
studies on position vectors in 3D and 4D Galilean spaces were performed
with the Frenet frame \cite{3,9,12}. In \cite{6}, the notion of the
involute-evolute curves in Minkowski 3-space $E_{1}^{3}$ are given by the
authors. In \cite{7}, the helices and slant helices are investigated using
non-degenerate curves in term of Sabban frame in de Sitter 3-space or Anti
de Sitter 3-space $M^{3}(\rho _{0})\subset $ $E_{2}^{4}$. In \cite{8}, the
representation formulas of non-null curves are primarily expressed and given
some certain results of describing the nun-null normal curve in four
dimensional semi-Euclidean space $E_{2}^{4}$. In \cite{2}, the position vector
of a time-like slant helix are expressed by the author in Minkowski 3-space.
In \cite{4,5}, the impact of magnetic fields on the moving particle
trajectories by variational approach to the magnetic flow associated with
the Killing magnetic field on lightlike cone is examined and different
magnetic curves are found in the 2D lightlike cone using the Killing
magnetic field of these curves. Also, some characterizations for $x$%
-magnetic curve and $x$-magnetic surface of rotation are given using the
Killing magnetic field of this curve in $Q^{2}$. In \cite{13}, the spacelike
rectifying curves lying fully in the 2-dimensional null cone and
3D null cone are characterized and given necessary and sufficient
conditions. In \cite{19}, Lorentzian indicators of curves with spacelike
principal normal are characterized by the author in Lorentzian 4-space $%
L^{4} $.

In this study, in the light of this paper, we extend aspects of classical
differential geometry topics to the pseudo-Galilean 4-Space by constructing Frenet-serret equations of an admissible curve and in the terms of this
frame, we express some special curves in 4D pseudo-Galilean space and their characterizations such as position curve, osculating curve, normal curve, rectifying curve, slant helices, spherical curves by using Frenet-Serret equations in the pseudo-Galilean 4-Space $G_{1}^{4}$.

\section{Preliminaries}

In affine coordinates the pseudo-Galilean scalar product between two points $%
P_{i}=(p_{i1},p_{i2},p_{i3},p_{i4}),i=1,2$ is defined by 
\begin{equation*}
g(P_{1},P_{2})=\{%
\begin{array}{c}
\left\vert p_{21}-p_{11}\right\vert , \\ 
\sqrt{\left\vert
-(p_{22}-p_{12})^{2}+(p_{23}-p_{13})^{2}+(p_{24}-p_{14})^{2}\right\vert },%
\end{array}%
\begin{array}{c}
\text{if }p_{21}\neq p_{11}, \\ 
\text{if \ }p_{21}=p_{11}.%
\end{array}%
\end{equation*}

We define the Pseudo-Galilean cross product for the vectors $\overrightarrow{%
u}$ $=(u_{1},u_{2},u_{3},u_{4}),$ $\overrightarrow{v}%
=(v_{1},v_{2},v_{3},v_{4})$ and $\overrightarrow{w}$ $%
=(w_{1},w_{2},w_{3},w_{4})$ as follows:

\begin{equation*}
\overrightarrow{u}\wedge \overrightarrow{v}\wedge \overrightarrow{w}%
=\left\vert 
\begin{array}{cccc}
0 & -e_{2} & e_{3} & e_{4} \\ 
u_{1} & u_{2} & u_{3} & u_{4} \\ 
v_{1} & v_{2} & v_{3} & v_{4} \\ 
w_{1} & w_{2} & w_{3} & w_{4}%
\end{array}%
\right\vert ,\text{ if }u_{1}\neq 0\text{ or }v_{1}\neq 0\text{ or }%
w_{1}\neq 0,
\end{equation*}%
\begin{equation*}
\overrightarrow{u}\wedge \overrightarrow{v}\wedge \overrightarrow{w}%
=\left\vert 
\begin{array}{cccc}
-e_{1} & e_{2} & e_{3} & e_{4} \\ 
u_{1} & u_{2} & u_{3} & u_{4} \\ 
v_{1} & v_{2} & v_{3} & v_{4} \\ 
w_{1} & w_{2} & w_{3} & w_{4}%
\end{array}%
\right\vert ,\text{ if }u_{1}=\text{ }v_{1}=w_{1}=0.
\end{equation*}

A vector $X(x,y,z,w)$ is called to be non-isotropic, if $x\neq 0$. All
unit non-isotropic vectors are of the form $(1,y,z,w)$. For isotropic
vectors, $x=0$ holds.

A non-lightlike vector is a unit vector if $-y^{2}+z^{2}+w^{2}=\pm 1.$

In the 4D pseudo-Galilean space $G_{4}^{1}$, there are isotropic vectors $%
X(x,y,z,w)$ and four types of isotropic vectors: spacelike ($x=0$, $%
-y^{2}+z^{2}+w^{2}>0$), timelike ($x=0$, $-y^{2}+z^{2}+w^{2}<0$) and two
types of lightlike vectors ($x=0$, $y=\sqrt{z^{2}+w^{2}}$ ). The scalar
product of two vectors $\overrightarrow{U}=(u_{1},u_{2},u_{3},u_{4})$ and $%
\overrightarrow{V}=(v_{1},v_{2},v_{3},v_{4})$ in $G_{4}^{1}$ is defined by

\begin{equation*}
\left\langle \overrightarrow{U},\overrightarrow{V}\right\rangle
_{G_{4}^{1}}=\{%
\begin{array}{c}
u_{1}v_{1},\text{ \ \ \ \ \ \ \ \ \ \ \ \ \ \ \ \ \ \ \ \ \ \ \ \ \ \ if }%
u_{1}\neq 0\text{ or }v_{1}\neq 0, \\ 
-u_{2}v_{2}+u_{3}v_{3}+u_{4}v_{4},\text{ \ if }u_{1}=0\text{ and }v_{1}=0.%
\end{array}%
\end{equation*}

The norm of vector $\overrightarrow{U}=(u_{1},u_{2},u_{3},u_{4})$ is defined
by 
\begin{equation*}
\left\Vert \overrightarrow{U}\right\Vert _{G_{4}^{1}}=\sqrt{\left\vert
\left\langle \overrightarrow{U},\overrightarrow{U}\right\rangle \right\vert
_{G_{4}^{1}}}.
\end{equation*}

\subsection{Construction of the Frenet-Serret Frame and the Frenet-Serret
Equations}

A curve $\alpha :I\subset 
\mathbb{R}
\rightarrow G_{4}^{1},$ $\alpha (t)=(x(t),y(t),z(t),w(t))$ is called an
admissible curve if $x^{\prime }(t)\neq 0.$

Let $\alpha :I\subset 
\mathbb{R}
\rightarrow G_{4}^{1},$ $\alpha (s)=(s,y(s),z(s),w(s))$ be an admissible
curve parametrized by arclength $s$ in $G_{4}^{1}.$ Here we denote
differentiation with respect to $s$ by a dash. The first vector of the
Frenet-Serret frame, that is the tangent vector of $\alpha $ is defined by%
\begin{equation*}
T=\alpha ^{\prime }(s)=(1,y^{\prime }(s),z^{\prime }(s),w^{\prime }(s)).
\end{equation*}

Since $T$ is a unit non-isotropic vector, so we can express%
\begin{equation}
\left\langle T,T\right\rangle _{G_{4}^{1}}=1.  \tag{2.1}
\end{equation}

Differentiating the formula (2.1) with respect to $s$, we have%
\begin{equation*}
\left\langle T^{\prime },T\right\rangle _{G_{4}^{1}}=0.
\end{equation*}

The vector function $T^{\prime }$ gives us the rotation measurement of the
curve $\alpha .$ The real valued function%
\begin{equation*}
\kappa (s)=\left\Vert T^{\prime }(s)\right\Vert =\sqrt{\left\vert
-(y^{\prime \prime }(s))^{2}+(z^{\prime \prime }(s))^{2}+(w^{\prime \prime
}(s))^{2}\right\vert }
\end{equation*}%
is called the first curvature of the curve $\alpha .$ We assume that, $%
\kappa (s)\neq 0,$ for all $s\in I.$ Similarly, we define
the principal normal vector%
\begin{equation*}
N(s)=\frac{T^{\prime }(s)}{\kappa (s)}
\end{equation*}
or another words 
\begin{equation}
N(s)=\frac{1}{\kappa (s)}(0,y^{\prime \prime }(s),z^{\prime \prime
}(s),w^{\prime \prime }(s)).  \tag{2.2}
\end{equation}

By the aid of the differentiation of the principal normal vector (2.2), we
define the second curvature function as%
\begin{equation}
\tau (s)=\left\Vert N^{\prime }(s)\right\Vert _{G_{4}^{1}}.  \tag{2.3}
\end{equation}%

This real valued function is called torsion of the curve $\alpha .$ The
third vector field, namely bi-normal vector field of the curve $\alpha $ is
defined by%
\begin{equation}
B_{1}(s)=\frac{1}{\tau (s)}(0,(\frac{y^{\prime \prime }(s)}{\kappa (s)}%
)^{\prime },(\frac{z^{\prime \prime }(s)}{\kappa (s)})^{\prime },(\frac{%
w^{\prime \prime }(s)}{\kappa (s)})^{\prime }).  \tag{2.4}
\end{equation}%

Thus, the vector $B_{1}(s)$ is both perpendicular to $T$ and $N.$ The fourth
unit vector is defined by%
\begin{equation}
B_{2}(s)=\mu T(s)\wedge N(s)\wedge B_{1}(s).  \tag{2.5}
\end{equation}%

Here, the coefficient $\mu $ is taken $\pm 1$ to make $+1$ determinant of the
matrix $\left[ T,N,B_{1},B_{2}\right] .$ We define the third curvature of
the curve $\alpha $ by the pseudo-Galilean inner product%
\begin{equation}
\sigma =\left\langle B_{1}^{\prime },B_{2}\right\rangle _{G_{4}^{1}}. 
\tag{2.6}
\end{equation}%

Here, as well known, the set $\left\{ T,N,B_{1},B_{2},\kappa ,\tau ,\sigma
\right\} $ is called the Frenet-Serret apparatus of the curve $\alpha .$ We
know that the vectors $\left\{ T,N,B_{1},B_{2}\right\} $ are mutually
orthogonal vectors satisfying%
\begin{equation}
\left\langle T,T\right\rangle _{G_{4}^{1}}=1,\left\langle N,N\right\rangle
_{G_{4}^{1}}=\varepsilon _{1},\left\langle B_{1},B_{1}\right\rangle
_{G_{4}^{1}}=\varepsilon _{2},\left\langle B_{2},B_{2}\right\rangle
_{G_{4}^{1}}=\varepsilon _{3},  \tag{2.7}
\end{equation}%
\begin{equation*}
\left\langle T,N\right\rangle _{G_{4}^{1}}=\left\langle T,B_{1}\right\rangle
_{G_{4}^{1}}=\left\langle T,B_{2}\right\rangle _{G_{4}^{1}}=\left\langle
N,B_{1}\right\rangle _{G_{4}^{1}}=\left\langle N,B_{2}\right\rangle
_{G_{4}^{1}}=\left\langle B_{2},B_{2}\right\rangle _{G_{4}^{1}}=0,
\end{equation*}%
where

\begin{equation*}
\varepsilon _{3}=\{%
\begin{array}{c}
+1,\text{ if }\varepsilon _{1}=-1\text{ or }\varepsilon _{2}=-1 \\ 
-1,\text{ if }\varepsilon _{1}=1\text{ and }\varepsilon _{2}=1%
\end{array}%
\end{equation*}
\ 

Now , we will obtain the Frenet-Serret equations for an admissible curve in $%
G_{4}^{1}.$

Let $\alpha (s)=(s,y(s),z(s),w(s))$ be an admissible curve parametrized by
arclength $s$ in $G_{4}^{1}$. Considering the above definitions, we have%
\begin{equation}
T^{\prime }=\varepsilon _{1}\kappa N,  \tag{2.8}
\end{equation}%
where $\varepsilon _{1}=\pm 1$. It is possible to define the vector $%
N^{\prime }$ according to frame $\left\{ T,N,B_{1},B_{2}\right\} $%
\begin{equation*}
N^{\prime }=\lambda _{1}T+\lambda _{2}N+\lambda _{3}B_{1}+\lambda _{4}B_{2},
\end{equation*}
$\lambda _{i}\in 
\mathbb{R}
,$ for $1\leq i\leq 4$. Multiplying both sides by the vectors $\left\{
T,N,B_{1},B_{2}\right\} $ and considering equation (2.1), we have,
respectively%
\begin{equation*}
\lambda _{1}=\lambda _{2}=0,\lambda _{3}=\varepsilon _{2}\tau ,\lambda
_{4}=0,
\end{equation*}
where $\varepsilon _{2}=-\varepsilon _{1}.$ Thus we have%
\begin{equation}
N^{\prime }=\varepsilon _{2}\tau B_{1}.  \tag{2.9}
\end{equation}

In order to compute the vector $B_{1}^{\prime },$ let us decompose%
\begin{equation*}
B_{1}^{\prime }=\lambda _{1}^{1}T+\lambda _{2}^{1}N+\lambda
_{3}^{1}B_{1}+\lambda _{4}^{1}B_{2},
\end{equation*}
where $\lambda _{i}^{1}\in 
\mathbb{R}
,$ for $1\leq i\leq 4$. Similar to $N^{\prime },$ we express%
\begin{equation*}
\lambda _{1}^{1}=0,\lambda _{2}^{1}=-\varepsilon _{2}\tau ,\lambda
_{3}^{1}=0,\lambda _{4}^{1}=\varepsilon _{3}\sigma .
\end{equation*}

So, we get%
\begin{equation*}
B_{1}^{\prime }=-\varepsilon _{2}\tau N+\varepsilon _{3}\sigma B_{2}.
\end{equation*}

Similar to above calculations, we can write%
\begin{equation*}
B_{2}^{\prime }=\lambda _{1}^{2}T+\lambda _{2}^{2}N+\lambda
_{3}^{2}B_{1}+\lambda _{4}^{2}B_{2},
\end{equation*}
and we obtain%
\begin{equation*}
\lambda _{1}^{2}=0,\lambda _{2}^{2}=0,\lambda _{3}^{2}=-\varepsilon
_{2}\sigma ,\lambda _{4}^{2}=0.
\end{equation*}

Thus we get%
\begin{equation}
B_{2}^{\prime }=-\varepsilon _{2}\sigma B_{1}.  \tag{2.10}
\end{equation}

Consequently we have following the Frenet-Serret equations in matrix form%
\begin{equation}
\frac{\partial }{\partial s}\left[ 
\begin{array}{c}
T \\ 
N \\ 
B_{1} \\ 
B_{2}%
\end{array}%
\right] =\left[ 
\begin{array}{cccc}
0 & \varepsilon _{1}\kappa & 0 & 0 \\ 
0 & 0 & \varepsilon _{2}\tau & 0 \\ 
0 & -\varepsilon _{2}\tau & 0 & \varepsilon _{3}\sigma \\ 
0 & 0 & -\varepsilon _{2}\sigma & 0%
\end{array}%
\right] \left[ 
\begin{array}{c}
T \\ 
N \\ 
B_{1} \\ 
B_{2}%
\end{array}%
\right] .  \tag{2.11}
\end{equation}

Four-dimensional pseudo-Galilean transformations, invariant properties under
translation and Minkowskian rotation, and the Frenet-Serret formulas
of a curve have been studied by \cite{1}. We will express the Frenet frame
as above and use the above expressions as references in next section. We
will also try to express the fundamental theorems of curve theory for some
special curves using this Frenet-Serret frame in $G_{1}^{4}$.

\section{Some characterizations of the special curves in $G_{1}^{4}$}

In this section, we consider an arbitrary admissible curve $\alpha :I\subset
IR\rightarrow G_{1}^{4}$ as a curve whose position vector satisfies the
parametric equation (3.1). Also, this study provides classifications of
admissible curves in $G_{1}^{4}$. That is, the admissible rectifying curve
lying fully in the $G_{1}^{4}$ are expressed and given necessary and
sufficient conditions for such curve to be a admissible rectifying curve in $%
G_{1}^{4}.$ Furthermore, by the classical differential geometry methods some
special curves are expressed in 4D pseudo-Galilean space and
their characterizations for example slant helix, 3-type slant helix,
spherical curves, osculating curves, rectifying curves, normal curves.

\begin{theorem}
The position vector of the admissible curve with curvatures $%
\kappa,\tau,\sigma \neq 0$, and with respect to the Frenet frame in the
pseudo-Galilean space $G_{1}^{4}$, it can be written as 
\begin{equation*}
\alpha (s)=\left( s+c^{\ast \ast }\right) T(s)-\left( \frac{\varepsilon _{2}%
}{\tau }\overset{..}{\mu }_{4}(t)+\frac{\sigma }{\tau }\mu _{4}(t)\right)
N(s)-\overset{.}{\mu }_{4}(t)B_{1}(s)+\mu _{4}(s)B_{2}(s)
\end{equation*}%
where 
\begin{equation*}
\mu _{4}(t)=t\int t\left( \int \frac{1}{t^{5}}e^{-\varepsilon _{2}\int 
\frac{\tau }{\overset{.}{\tau }}dt}\left( \frac{\varepsilon _{1}}{%
\varepsilon _{2}\varepsilon _{3}}\frac{\kappa \tau }{\sigma }\frac{h}{t}%
\int \frac{1}{t^{5}}e^{\varepsilon _{2}\int \frac{\tau }{\overset{.}{\tau }%
}dt}dt+c_{11}\right) dt\right) dt+c_{12},
\end{equation*}%
$c^{\ast \ast },c_{11},c_{12}\in 
\mathbb{R}
$, $t=\varepsilon _{3}\int \sigma (s)ds,$ and the following equations are
satisfied 
\begin{equation*}
3(\frac{1}{\tau }-\frac{\overset{.}{\tau }}{\tau ^{2}}t)+\frac{\sigma }{\tau 
}-\frac{1}{\varepsilon _{3}}\frac{\tau }{\sigma }=0\text{ ; }t\frac{d}{dt}(%
\frac{\sigma }{\tau })+\frac{\sigma }{\tau }-\frac{\varepsilon _{2}}{%
\varepsilon _{3}}\frac{\tau }{\sigma }=0.
\end{equation*}

\begin{proof}
Let $\alpha :I\subset IR\rightarrow G_{1}^{4}$ be an arbitrary admissible
curve in the pseudo-Galilean space $G_{1}^{4}$, then\ we can express its
position vector as%
\begin{equation}
\alpha (s)=\mu _{1}(s)T(s)+\mu _{2}(s)N(s)+\mu _{3}(s)B_{1}(s)+\mu
_{4}(s)B_{2}(s),  \tag{3.1}
\end{equation}%
for some differentiable functions, $\mu _{i}(s)$ and $1\leq i\leq 4$, by
differentiating this equation with respect to the arc-length parameter $s$
and using the Serret-Frenet equations (2.11), we obtain%
\begin{eqnarray*}
\alpha ^{\prime }(s) &=&\mu _{1}^{\prime }(s)T(s)+(\mu _{1}(s)\varepsilon
_{1}\kappa +\mu _{2}^{\prime }(s)-\mu _{3}(s)\varepsilon _{1}\tau )N(s) \\
&&+(\mu _{2}(s)\varepsilon _{2}\tau +\mu _{3}^{\prime }(s)-\mu
_{4}(s)\varepsilon _{2}\sigma )B_{1}(s)+(\mu _{3}(s)\varepsilon _{3}\sigma
+\mu _{4}^{\prime }(s))B_{2}(s)
\end{eqnarray*}%
it follows that 
\begin{equation}
\mu _{1}^{\prime }(s)=1  \tag{3.2a}
\end{equation}%
\begin{equation}
\mu _{1}(s)\varepsilon _{1}\kappa +\mu _{2}^{\prime }(s)-\mu
_{3}(s)\varepsilon _{2}\tau =0  \tag{3.2b}
\end{equation}%
\begin{equation}
\mu _{2}(s)\varepsilon _{2}\tau +\mu _{3}^{\prime }(s)-\mu
_{4}(s)\varepsilon _{2}\sigma =0  \tag{3.2c}
\end{equation}%
\begin{equation}
\mu _{3}(s)\varepsilon _{3}\sigma +\mu _{4}^{\prime }(s)=0.  \tag{3.2d}
\end{equation}%

From previous equations, we get%
\begin{equation}
\mu _{1}(s)=s+c^{\ast \ast }=h(t),  \tag{3.3}
\end{equation}%
we can use to change the variables to the variable $\frac{dt}{ds}%
=\varepsilon _{3}\sigma (s)$. Then, all functions of $s$ will transform to
the functions of $t$. Here, the derivative with respect to $t$ is denoted as
dot. Therefore, from Eq. (3.2), we get%
\begin{equation}
\mu _{3}(t)+\overset{.}{\mu }_{4}(t)=0,  \tag{3.4}
\end{equation}%
from the equation $\mu _{2}(s)\varepsilon _{2}\tau +\mu _{3}^{\prime
}(s)-\mu _{4}(s)\varepsilon _{2}\sigma =0$, it leads to%
\begin{equation}
\mu _{2}(t)=-\frac{\varepsilon _{2}}{\tau }\overset{..}{\mu }_{4}(t)-\frac{%
\sigma }{\tau }\mu _{4}(t),  \tag{3.5}
\end{equation}%
by taking derivative previous equation, we get 
\begin{equation}
\frac{d}{dt}\mu _{2}(t)=-\varepsilon _{2}\frac{d}{dt}(\frac{1}{\tau })%
\overset{..}{\mu }_{4}(t)-\frac{\varepsilon _{2}}{\tau }\overset{...}{\mu }%
_{4}(t)-\frac{d}{dt}(\frac{\sigma }{\tau })\mu _{4}(t)-\frac{\sigma }{\tau }%
\overset{.}{\mu }_{4}(t)  \tag{3.6}
\end{equation}
and from the equation $\mu _{1}(s)\varepsilon _{1}\kappa +\mu _{2}^{\prime
}(s)-\mu _{3}(s)\varepsilon _{2}\tau =0,$ it leads to%
\begin{equation*}
(s+c^{\ast \ast })\varepsilon _{1}\kappa +\overset{.}{\mu }%
_{2}(t)\varepsilon _{3}\sigma +\overset{.}{\mu }_{4}(t)\varepsilon _{2}\tau
=0
\end{equation*}%
and by calculating previous equation, we get 
\begin{equation}
\overset{.}{\mu }_{2}(t)=\frac{\varepsilon _{2}}{\varepsilon _{3}}\frac{\tau 
}{\sigma }\mu _{3}(t)-\frac{\varepsilon _{1}}{\varepsilon _{3}}\frac{\kappa 
}{\sigma }h(t)=-\frac{\varepsilon _{2}}{\varepsilon _{3}}\frac{\tau }{\sigma 
}\overset{.}{\mu }_{4}-\frac{\varepsilon _{1}}{\varepsilon _{3}}\frac{\kappa 
}{\sigma }h(t).  \tag{3.7}
\end{equation}%

When equations (3.6) and (3.7) are considered together, we get following
differential equation%
\begin{equation*}
\frac{\varepsilon _{2}}{\tau }\overset{...}{\mu }_{4}(t)-\varepsilon _{2}(%
\frac{\overset{.}{\tau }}{\tau ^{2}})\overset{..}{\mu }_{4}(t)+(\frac{\sigma 
}{\tau }-\frac{\varepsilon _{2}}{\varepsilon _{3}}\frac{\tau }{\sigma })%
\overset{.}{\mu }_{4}(t)+\frac{d}{dt}(\frac{\sigma }{\tau })\mu _{4}(t)=%
\frac{\varepsilon _{1}}{\varepsilon _{3}}\frac{\kappa }{\sigma }h(t).
\end{equation*}%

For the general solution of this equation, by using the condition 
\begin{equation*}
t\frac{d}{dt}(\frac{\sigma }{\tau })+\frac{\sigma }{\tau }-\frac{\varepsilon
_{2}}{\varepsilon _{3}}\frac{\tau }{\sigma }=0
\end{equation*}%
and by using transformation $\mu _{4}=tv$ the previous differential equation
is obtained as follows%
\begin{equation}
\overset{...}{v}(t)(\frac{\varepsilon _{2}}{\tau }t)-\overset{..}{v}(t)(3%
\frac{\varepsilon _{2}}{\tau }-\varepsilon _{2}\frac{\overset{.}{\tau }}{%
\tau ^{2}}t)+\overset{.}{v}(t)(-2\varepsilon _{2}\frac{\overset{.}{\tau }}{%
\tau ^{2}}+(\frac{\sigma }{\tau }-\frac{\varepsilon _{2}}{\varepsilon _{3}}%
\frac{\tau }{\sigma })t)=\frac{\varepsilon _{1}}{\varepsilon _{3}}\frac{%
\kappa }{\sigma }h(t),  \tag{3.8}
\end{equation}%
by making the transformation $\overset{.}{v}=w$ and by using following condition given as 
\begin{equation*}
3(\frac{1}{\tau }-\frac{\overset{.}{\tau }}{\tau ^{2}}t)+\frac{\sigma }{\tau 
}-\frac{1}{\varepsilon _{3}}\frac{\tau }{\sigma }=0.
\end{equation*}%

By making similar transformations in equation (3.8) is obtained as follows.%
\begin{equation}
\mu _{4}(t)=t\int t\left( \int \frac{1}{t^{5}}e^{-\varepsilon _{2}\int 
\frac{\tau }{\overset{.}{\tau }}dt}\left( \frac{\varepsilon _{1}}{%
\varepsilon _{2}\varepsilon _{3}}\frac{\kappa \tau h}{\sigma t}\int \frac{1%
}{t^{5}}e^{\varepsilon _{2}\int \frac{\tau }{\overset{.}{\tau }}%
dt}dt+c_{11}\right) dt\right) dt+c_{12},  \tag{3.9}
\end{equation}%
where $c_{11}$ and $c_{12}$ are arbitrary constants. From Eqs. (3.3), (3.4)
and (3.5), we obtain the functions $\mu _{3}(t),$ $\mu _{2}(t)$ as%
\begin{equation*}
\mu _{2}(t)=\frac{-\varepsilon _{2}}{\tau }\overset{..}{\mu }_{4}(t)-\frac{%
\sigma }{\tau }\mu _{4}(t);\mu _{3}(t)=-\overset{.}{\mu }_{4}(t),
\end{equation*}%
from Eqs. (3.9), the solution is obtained and thus, the proof is completed.
\end{proof}
\end{theorem}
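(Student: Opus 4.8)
The plan is to start from the ansatz that the position vector of $\alpha$ admits an expansion in its own Frenet frame,
\[
\alpha(s) = \mu_1(s)T(s) + \mu_2(s)N(s) + \mu_3(s)B_1(s) + \mu_4(s)B_2(s),
\]
with four unknown differentiable coefficient functions $\mu_i(s)$. Such an expansion always exists because $\{T,N,B_1,B_2\}$ is a basis along the curve; the content of the theorem is to pin the $\mu_i$ down in closed form. First I would differentiate this expansion with respect to arclength and substitute the Frenet-Serret equations (2.11), so that $\alpha'$ is again written in the frame $\{T,N,B_1,B_2\}$. Comparing the result with the identity $\alpha' = T$ and using the linear independence of the frame yields a coupled first-order system of four scalar ODEs in the coefficients $\mu_1,\dots,\mu_4$.

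The next step is to decouple and integrate this system. The tangential component gives $\mu_1' = 1$, hence $\mu_1(s) = s + c^{**}$ at once. To treat the remaining three equations I would change the independent variable through $dt/ds = \varepsilon_3\sigma(s)$, turning $s$-derivatives into $t$-derivatives (denoted by a dot). The $B_2$-component then expresses $\mu_3$ algebraically as a multiple of $\dot\mu_4$, and the $B_1$-component expresses $\mu_2$ in terms of $\mu_4$ and its first two $t$-derivatives. Substituting both expressions into the remaining ($N$-)component eliminates $\mu_2$ and $\mu_3$ completely, leaving a single third-order linear ODE for $\mu_4(t)$ whose inhomogeneous term is proportional to $\tfrac{\kappa}{\sigma}\,h(t)$ with $h(t) = s + c^{**}$.

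The hard part will be solving this third-order ODE in closed form, and this is exactly where the two compatibility conditions in the statement come in. I would first impose $t\tfrac{d}{dt}(\tfrac{\sigma}{\tau}) + \tfrac{\sigma}{\tau} - \tfrac{\varepsilon_2}{\varepsilon_3}\tfrac{\tau}{\sigma} = 0$ to simplify the zeroth-order coefficient, then apply the substitution $\mu_4 = t\,v$ to lower the effective order. The second condition $3(\tfrac{1}{\tau} - \tfrac{\dot\tau}{\tau^2}t) + \tfrac{\sigma}{\tau} - \tfrac{1}{\varepsilon_3}\tfrac{\tau}{\sigma} = 0$ is then used, after the further substitution $\dot v = w$, to clear the first-order term, reducing the problem to a first-order linear ODE in $w$ with integrating factor $e^{\pm\varepsilon_2\int(\tau/\dot\tau)\,dt}$. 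Integrating $w$ back up through $v$ and then $\mu_4$ yields the triple-nested-integral formula for $\mu_4$ with the two constants of integration $c_{11},c_{12}$. Finally I would recover $\mu_3 = -\dot\mu_4$ and $\mu_2 = -\tfrac{\varepsilon_2}{\tau}\ddot\mu_4 - \tfrac{\sigma}{\tau}\mu_4$ from the algebraic relations already obtained, and substitute all four coefficients back into the ansatz to reach the stated form of $\alpha(s)$.
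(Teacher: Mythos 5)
Your proposal is correct and follows essentially the same route as the paper's own proof: the Frenet-frame ansatz, the four-component comparison with $\alpha'=T$, the change of variable $dt/ds=\varepsilon_3\sigma$, the reduction to a third-order ODE for $\mu_4$, and the two compatibility conditions together with the substitutions $\mu_4=tv$, $\dot v=w$ to obtain the nested-integral formula. Nothing essential is missing.
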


\begin{theorem}
Let $\alpha $ be a unit speed curve in $G_{1}^{4}$ with the curvatures $%
\kappa ,\tau ,\sigma \neq 0$ and given as $t=\varepsilon _{3}\int \sigma
(s)ds.$ Then, the following expressions are provided.

1) the distance distance $d(t)=\left\Vert \alpha (t)\right\Vert $ satisfies 
\begin{equation*}
d^{2}(t)=2\int \mu _{1}(t)dt-2\int \mu _{2}(t)\left( \mu _{1}(t)\kappa
+(\varepsilon _{1}\varepsilon _{2}-1)\mu _{3}(t)\tau \right) dt,
\end{equation*}%
and If $\alpha $ is a rectifying curve the distance distance is given as
\begin{equation*}
d^{2}(t)=2\int \mu _{1}(t)dt=s^{2}+cs+d
\end{equation*}%
where $\mu _{1}(s)=s+c^{\ast \ast }$, $\mu _{2}(t)=\frac{-\varepsilon _{2}}{%
\tau }\overset{..}{\mu }_{4}(t)-\frac{\sigma }{\tau }\mu _{4}(t),$ $\mu
_{3}(t)=-\overset{.}{\mu }_{4}(t)$, $c,d\in 
\mathbb{R}
.$

2) the normal component $\alpha ^{N}(s)$ of the curve $\alpha$ is given as
\begin{equation*}
\left\Vert \alpha ^{N}\right\Vert =d^{2}(t)-\mu _{1}^{2}(t)
\end{equation*}%
and If $\alpha $ is a rectifying curve the normal component $\alpha ^{N}$ is given as
\begin{equation*}
\left\Vert \alpha ^{N}\right\Vert =2\int \mu _{1}(t)dt-\mu _{1}^{2}(t).
\end{equation*}

\begin{proof}
Let $\alpha $ be the curve with unit speed and non-zero curvatures $\kappa
,\tau ,\sigma \neq 0$ in $G_{1}^{4}$. The position vector of the curve $%
\alpha $ satisfies the equation (3.1). Also, let the functions $\mu
_{1}(t),\mu _{2}(t),\mu _{3}(t),\mu _{4}(t)$ be differentiable functions
satisfying the equation (3.2). Hence, the following equation can be written 
\begin{equation*}
d(t)=\mu _{1}^{2}(t)+\varepsilon _{1}\mu _{2}^{2}(t)+\varepsilon _{2}\mu
_{3}^{2}(t)+\varepsilon _{3}\mu _{4}^{2}(t)
\end{equation*}%
and in the equation system (3.2), when the equations (3.2a), (3.2b), (3.2c) and (3.2d) are multiplied by $\mu _{1}(t),\varepsilon _{1}\mu
_{2}(t),\varepsilon _{2}\mu _{3}(t),\varepsilon _{3}\mu _{4}(t)$
respectively and added together, we get 
\begin{equation}
\mu _{1}(t)\mu _{1}^{\prime }(s)=\mu _{1}(t)  \tag{3.10a}
\end{equation}%
\begin{equation}
\mu _{2}(t)\mu _{1}(s)\kappa +\varepsilon _{1}\mu _{2}(t)\mu _{2}^{\prime
}(s)-\varepsilon _{1}\varepsilon _{2}\mu _{2}(t)\mu _{3}(s)\tau =0 
\tag{3.10b}
\end{equation}%
\begin{equation}
\mu _{3}(t)\mu _{2}(s)\tau +\varepsilon _{2}\mu _{3}(t)\mu _{3}^{\prime
}(s)-\mu _{3}(t)\mu _{4}(s)\sigma =0  \tag{3.10c}
\end{equation}%
\begin{equation}
\mu _{4}(t)\mu _{3}(s)\sigma +\varepsilon _{3}\mu _{4}(t)\mu _{4}^{\prime
}(s)=0  \tag{3.10d}
\end{equation}%
%
and%
\begin{eqnarray*}
&&\mu _{1}(t)\mu _{1}^{\prime }(s)+\varepsilon _{1}\mu _{2}(t)\mu
_{2}^{\prime }(s)+\varepsilon _{2}\mu _{3}(t)\mu _{3}^{\prime }(s) \\
+\varepsilon _{3}\mu _{4}(t)\mu _{4}^{\prime }(s) &=&\mu _{1}(t)+\mu
_{2}(t)(-\mu _{1}(s)\kappa +(\varepsilon _{1}\varepsilon _{2}-1)\mu
_{3}(t)\tau )
\end{eqnarray*}%
and when multiplied by two and the derivative is taken, we have%
\begin{eqnarray*}
d^{2}(t) &=&\mu _{1}^{2}(t)+\varepsilon _{1}\mu _{2}^{2}(t)+\varepsilon
_{2}\mu _{3}^{2}(t)+\varepsilon _{3}\mu _{4}^{2}(t) \\
&=&2\int (\mu _{1}(t)+\mu _{2}(t)(-\mu _{1}(s)\kappa +(\varepsilon
_{1}\varepsilon _{2}-1)\mu _{3}(t)\tau )dt.
\end{eqnarray*}%

If the curve $\alpha $ is a rectifying curve, the position vector of the
curve $\alpha $ must be $\left\langle \alpha (t),N(t)\right\rangle =\mu
_{2}(t)=0$. Therefore, considering the above expression and the equations $%
\mu _{1}(s)=s+c^{\ast \ast }=h(t),\frac{dt}{ds}=\varepsilon _{3}\sigma $, we
obtain the following equation 
\begin{equation}
d^{2}(t)=2\int \mu _{1}(t)dt=s^{2}+cs+d;c,d\in 
\mathbb{R}
.  \tag{3.11}
\end{equation}%

Moreover, the normal component of the position vector $\alpha ^{N}$ of the
curve $\alpha $ is found as follows 
\begin{equation*}
\left\Vert \alpha ^{N}\right\Vert =d^{2}(t)-\mu _{1}^{2}(t)=2\int (\mu
_{1}(t)+\mu _{2}(t)(-\mu _{1}(s)\kappa +(\varepsilon _{1}\varepsilon
_{2}-1)\mu _{3}(t)\tau )dt-\mu _{1}^{2}(t).
\end{equation*}%

If $\alpha $ is a rectifying curve, since $\mu _{2}(t)=0$ the last equation
is found as%
\begin{equation}
\left\Vert \alpha ^{N}\right\Vert =2\int \mu _{1}(t)dt-\mu _{1}^{2}(t). 
\tag{3.12}
\end{equation}
\end{proof}
\end{theorem}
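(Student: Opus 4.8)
The plan is to read off the squared pseudo-Galilean norm of the position vector directly from the decomposition (3.1), and then convert it into the stated integral form by feeding in the coefficient equations (3.2). First I would use the orthonormality relations (2.7): since $\{T,N,B_1,B_2\}$ are mutually orthogonal with $\langle T,T\rangle=1$, $\langle N,N\rangle=\varepsilon_1$, $\langle B_1,B_1\rangle=\varepsilon_2$, $\langle B_2,B_2\rangle=\varepsilon_3$, inserting (3.1) into $\langle\alpha,\alpha\rangle$ annihilates every cross term and leaves
\[
d^{2}=\mu_1^{2}+\varepsilon_1\mu_2^{2}+\varepsilon_2\mu_3^{2}+\varepsilon_3\mu_4^{2}.
\]

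Next, to reach the integral expression I would differentiate this with respect to arc length, so that $\tfrac12(d^{2})'=\mu_1\mu_1'+\varepsilon_1\mu_2\mu_2'+\varepsilon_2\mu_3\mu_3'+\varepsilon_3\mu_4\mu_4'$, and then eliminate the derivatives $\mu_i'$ using the system (3.2). Concretely, I would multiply (3.2a)--(3.2d) by $\mu_1,\ \varepsilon_1\mu_2,\ \varepsilon_2\mu_3,\ \varepsilon_3\mu_4$ respectively, obtaining the identities (3.10a)--(3.10d), and add them. The key observation is that the torsion/$\sigma$ off-diagonal contributions pair up: in particular the two terms $\pm\,\mu_3\mu_4\sigma$ coming from (3.10c) and (3.10d) cancel, so that the surviving terms assemble into $\tfrac12(d^{2})'=\mu_1+\mu_2\bigl(-\mu_1\kappa+(\varepsilon_1\varepsilon_2-1)\mu_3\tau\bigr)$. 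Integrating both sides then yields the first displayed formula of part (1).

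For the rectifying sub-case I would invoke the defining property of a rectifying curve, namely that its position vector carries no principal-normal component, i.e. $\langle\alpha,N\rangle=\mu_2=0$. Setting $\mu_2=0$ collapses the integral to $d^{2}=2\int\mu_1\,dt$; substituting $\mu_1=s+c^{\ast\ast}$ from (3.3) and integrating produces the polynomial $s^{2}+cs+d$. Part (2) I would then obtain by the standard tangential--normal splitting: since $\langle\alpha,T\rangle=\mu_1$ and $\langle T,T\rangle=1$, the tangential projection is $\alpha^{T}=\mu_1T$ with $\|\alpha^{T}\|^{2}=\mu_1^{2}$, and orthogonality of the splitting gives $\|\alpha^{N}\|^{2}=\|\alpha\|^{2}-\|\alpha^{T}\|^{2}=d^{2}-\mu_1^{2}$. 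The rectifying version follows at once by inserting $d^{2}=2\int\mu_1\,dt$.

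The main obstacle I anticipate is the sign and $\varepsilon_i$ bookkeeping: I must track the products $\varepsilon_i\mu_i$ carefully so that the $\sigma$ cross terms cancel and the surviving $\kappa$- and $\tau$-terms combine into exactly the coefficient $(\varepsilon_1\varepsilon_2-1)$, using $\varepsilon_i^{2}=1$ and $\varepsilon_2=-\varepsilon_1$. A secondary subtlety is the change of variable $t=\varepsilon_3\int\sigma(s)\,ds$ with $dt/ds=\varepsilon_3\sigma$: because the antiderivative $\int\mu_1$ is meant to reproduce a polynomial in $s$, I would interpret the integration consistently with $\mu_1=s+c^{\ast\ast}$ rather than literally integrating $\mu_1$ against $dt$, and I would keep the convention that the quantities denoted $d^{2}$ and $\|\alpha^{N}\|$ are the squared norms computed in the $\varepsilon$-weighted Frenet basis.
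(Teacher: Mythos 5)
Your proposal is correct and follows essentially the same route as the paper: compute $\|\alpha\|^{2}$ via the $\varepsilon$-weighted orthogonality relations (2.7), multiply (3.2a)--(3.2d) by $\mu_1,\ \varepsilon_1\mu_2,\ \varepsilon_2\mu_3,\ \varepsilon_3\mu_4$ and sum so the $\sigma$-terms cancel, integrate, and then specialize to $\mu_2=0$ for the rectifying case and subtract $\mu_1^2$ for the normal component. Your closing remarks on the $\varepsilon_i$ bookkeeping and on reading the $\int\mu_1$ integral against $s$ rather than $t$ correctly flag the same notational looseness present in the paper's own argument.
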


\begin{theorem}
Let $\alpha :I\subset IR\rightarrow G_{1}^{4}$ be an arbitrary admissible
curve $\kappa ,\tau ,\sigma \neq 0$ in the pseudo-Galilean space $G_{1}^{4}$.  Then, the position vector and curvatures of a vector satisfy 
differential equation of fourth order.%
\begin{equation*}
\left\{ \frac{1}{\sigma \varepsilon _{2}}\left\{ \left( \frac{1}{\tau
\varepsilon _{2}}\left( \frac{T^{\prime }}{\kappa \varepsilon _{1}}\right)
^{\prime }\right) ^{\prime }+\tau \varepsilon _{2}\frac{T^{\prime }}{\kappa
\varepsilon _{1}}\right\} \right\} ^{\prime }+\frac{\sigma }{\tau }\left( 
\frac{T^{\prime }}{\kappa \varepsilon _{1}}\right) ^{\prime }=0
\end{equation*}

\begin{proof}
Let $\alpha :I\subset IR\rightarrow G_{1}^{4}$ be an arbitrary admissible
curve $\kappa ,\tau ,\sigma \neq 0$ in $G_{1}^{4}$ and from Frenet equations
(2.11), we can write%
\begin{equation}
N=\frac{T^{\prime }}{\kappa \varepsilon _{1}};B_{1}=\frac{1}{\tau
\varepsilon _{2}}\left( \frac{T^{\prime }}{\kappa \varepsilon _{1}}\right)
^{\prime },  \tag{3.13}
\end{equation}%
\begin{equation}
B_{2}=\frac{1}{\sigma \varepsilon _{3}}\left( B_{1}^{\prime }+\tau
\varepsilon _{2}N\right) .  \tag{3.14}
\end{equation}%

By differentiating $B_{1}$ with respect to $s$ and substituting in (3.14),
we find%
\begin{equation}
B_{2}=\frac{1}{\sigma \varepsilon _{3}}\left( \left( \frac{1}{\tau
\varepsilon _{2}}\left( \frac{T^{\prime }}{\kappa \varepsilon _{1}}\right)
^{\prime }\right) ^{\prime }+\tau \varepsilon _{2}\frac{T^{\prime }}{\kappa
\varepsilon _{1}}\right) .  \tag{3.15}
\end{equation}%

Similarly, by taking the differentiation of (3.15) and equalize with (2.11),
we find%
\begin{equation}
\left\{ \frac{1}{\sigma \varepsilon _{2}}\left\{ \left( \frac{1}{\tau
\varepsilon _{2}}\left( \frac{T^{\prime }}{\kappa \varepsilon _{1}}\right)
^{\prime }\right) ^{\prime }+\tau \varepsilon _{2}\frac{T^{\prime }}{\kappa
\varepsilon _{1}}\right\} \right\} ^{\prime }+\frac{\sigma }{\tau }\left( 
\frac{T^{\prime }}{\kappa \varepsilon _{1}}\right) ^{\prime }=0.  \tag{3.16}
\end{equation}
\end{proof}
\end{theorem}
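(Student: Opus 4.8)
The plan is to treat the Frenet--Serret system (2.11) as a triangular system that can be solved successively for the frame vectors $N$, $B_1$, $B_2$ in terms of the tangent $T$ and its derivatives, and then to use the one remaining Frenet equation --- the equation for $B_2'$ --- as a closing identity in which every frame vector has been eliminated in favour of $T'$ and its higher derivatives. First I would read off from the first Frenet equation $T' = \varepsilon_1 \kappa N$ that $N = \frac{T'}{\kappa \varepsilon_1}$, which is exactly the innermost expression appearing throughout the claimed differential equation.

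Next I would differentiate this relation and insert the second Frenet equation $N' = \varepsilon_2 \tau B_1$ to obtain $B_1 = \frac{1}{\tau \varepsilon_2}\left(\frac{T'}{\kappa \varepsilon_1}\right)'$, which is the content of (3.13). Then, using the third Frenet equation $B_1' = -\varepsilon_2 \tau N + \varepsilon_3 \sigma B_2$, I would solve for $B_2 = \frac{1}{\sigma \varepsilon_3}\left(B_1' + \tau \varepsilon_2 N\right)$ as in (3.14), and substitute the expressions already found for $B_1$ and $N$ so as to express $B_2$ purely through $T'$ and its derivatives; this yields (3.15).

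The closing step is to differentiate (3.15) once more with respect to $s$ and to equate the result with the right-hand side of the fourth Frenet equation $B_2' = -\varepsilon_2 \sigma B_1$. Replacing $B_1$ on that right-hand side by $\frac{1}{\tau \varepsilon_2}\left(\frac{T'}{\kappa \varepsilon_1}\right)'$ converts $-\varepsilon_2 \sigma B_1$ into $-\frac{\sigma}{\tau}\left(\frac{T'}{\kappa \varepsilon_1}\right)'$, and transposing this term to the left-hand side produces precisely the fourth-order differential equation (3.16). No integration is required anywhere; the whole argument is an algebraic inversion of (2.11) followed by a single differentiation at the last stage.

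The only delicate point I anticipate is purely bookkeeping rather than conceptual: one must carry the sign factors $\varepsilon_1,\varepsilon_2,\varepsilon_3$ and the reciprocal coefficients $\frac{1}{\kappa \varepsilon_1}$, $\frac{1}{\tau \varepsilon_2}$, $\frac{1}{\sigma \varepsilon_3}$ correctly through the nested product- and quotient-rule differentiations, since each of $\kappa,\tau,\sigma$ depends on $s$ and the coefficients sit inside the derivatives. In particular, the orthogonality relations (2.7) together with the stated sign convention for $\varepsilon_3$ should be used to verify the outer factor in the first bracket; there is no genuine analytic obstacle, only the risk of a sign or factor slip in the final differentiation.
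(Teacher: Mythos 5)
Your proposal follows essentially the same route as the paper: invert the triangular Frenet system (2.11) to express $N$, $B_{1}$, $B_{2}$ successively through $T'$ and its derivatives, then close with $B_{2}'=-\varepsilon _{2}\sigma B_{1}$, which is exactly the sequence (3.13)--(3.16). The only remark worth adding is that your (correct) bookkeeping would put $\frac{1}{\sigma \varepsilon _{3}}$ in the outer bracket of the final equation, consistent with (3.15), whereas the paper's (3.16) prints $\frac{1}{\sigma \varepsilon _{2}}$ there --- an apparent typographical slip in the source rather than a flaw in your argument.
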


\begin{theorem}
Let $\alpha :I\rightarrow G_{1}^{4}$ be an arbitrary rectifying
curve in $G_{1}^{4}$ given by $\alpha (t)=\rho (t)\beta (t)$, where $\rho
(t) $ is an arbitrary positive function. Then, if $\alpha $ is a rectifying
curve, $\beta (t)$ is a unit speed rectifying curve and 
\begin{equation*}
0=\rho \left( \frac{\rho ^{\prime }}{v}\right) ^{\prime }(s^{2}+cs+d)+\left( 
\frac{\rho ^{\prime }}{v}+\left( \frac{\rho }{v}\right) ^{\prime }+\frac{%
\rho }{v}\right) (s+c^{\ast \ast }),
\end{equation*}%
where $v=\sqrt{\rho ^{\prime 2}+\rho ^{2}},c,d,c^{\ast \ast }\in 
\mathbb{R}
.$

\begin{proof}
Let $\alpha $ be an arbitrary admissible curve in $G_{1}^{4}$ given as 
\begin{equation*}
\alpha (t)=\rho (t)\beta (t)
\end{equation*}%
and $\rho (t)$ be an arbitrary positive function and $\beta (t)$ be a unit
speed curve. By taking the derivative of the previous curve with respect to $%
t$, we find 
\begin{equation}
\alpha ^{\prime }(t)=\rho ^{\prime }(t)\beta (t)+\rho (t)\beta ^{\prime }(t).
\tag{3.17}
\end{equation}%

Therefore, the tangent vector of $\alpha $ is given as follows%
\begin{equation}
T=\frac{\rho ^{\prime }}{v}\beta +\frac{\rho }{v}\beta ^{\prime }  \tag{3.18}
\end{equation}
where 
\begin{equation*}
v^{2}=\rho ^{\prime 2}+\rho ^{2}.
\end{equation*}

If we take the derivative with respect to $t$ again in (3.18) and using
(2.11), we find 
\begin{equation}
\varepsilon _{1}\kappa N=\left( \frac{\rho ^{\prime }}{v}\right) ^{\prime
}\beta +\left( \frac{\rho ^{\prime }}{v}+\left( \frac{\rho }{v}\right)
^{\prime }\right) \beta ^{\prime }+\frac{\rho }{v}\beta ^{\prime \prime }. 
\tag{3.19}
\end{equation}%

Then, we can write the following expressions for the curve $\beta $ for the
Frenet frame $\left\{ t_{\beta },n_{\beta },b_{1\beta },b_{2\beta }\right\} $
in $G_{1}^{4}$%
\begin{equation}
t_{\beta }^{\prime }=\varepsilon _{1}kn_{\beta };n_{\beta }^{\prime
}=\varepsilon _{2}\varkappa b_{1\beta };b_{1\beta }^{\prime }=-\varepsilon
_{2}\varkappa n_{\beta }+\varepsilon _{2}\sigma _{\beta }b_{2\beta
};b_{2\beta }^{\prime }=-\varepsilon _{2}\sigma _{\beta }b_{1\beta }. 
\tag{3.20}
\end{equation}%

We assume that $\beta ^{\prime }=t_{\beta }$ and decomposition of $%
\beta ^{\prime \prime }$ with respect to the frame $\left\{ t_{\beta
},n_{\beta },b_{1\beta },b_{2\beta }\right\} $ reads 
\begin{equation}
\beta ^{\prime \prime }=a_{1}t_{\beta }+a_{2}n_{\beta }+a_{3}b_{1\beta
}+a_{4}b_{2\beta }  \tag{3.21}
\end{equation}%
and inner products become as it follows that 
\begin{eqnarray*}
\left\langle \beta ^{\prime \prime },t_{\beta }\right\rangle
&=&a_{1}=1;\left\langle \beta ^{\prime \prime },n_{\beta }\right\rangle
=\varepsilon _{1}a_{2};\left\langle \beta ^{\prime \prime },b_{1\beta
}\right\rangle =\varepsilon _{2}a_{3};\left\langle \beta ^{\prime \prime
},b_{2\beta }\right\rangle =\varepsilon _{3}a_{4} \\
\left\langle \beta ^{\prime \prime },n_{\beta }\right\rangle &=&\varepsilon
_{1}a_{2}\rightarrow \left\langle t_{\beta }^{\prime },n_{\beta
}\right\rangle =\left\langle \varepsilon _{1}kn_{\beta },n_{\beta
}\right\rangle =\varepsilon _{1}^{2}k=\varepsilon _{1}a_{2}\rightarrow
a_{2}=k\varepsilon _{1};a_{4},a_{3}=0
\end{eqnarray*}%
so the equation (3.21) becomes 
\begin{equation}
\beta ^{\prime \prime }=t_{\beta }+k\varepsilon _{1}n_{\beta }.  \tag{3.22}
\end{equation}%

By using (3.22) into (3.19), we can have 
\begin{equation}
\varepsilon _{1}\kappa N=\left( \frac{\rho ^{\prime }}{v}\right) ^{\prime
}\beta +\left( \frac{\rho ^{\prime }}{v}+\left( \frac{\rho }{v}\right)
^{\prime }+\frac{\rho }{v}\right) t_{\beta }+\varepsilon _{1}k\frac{\rho }{v}%
n_{\beta }.  \tag{3.23}
\end{equation}%

For the last equation, when the $\alpha $ curve is considered to be a
rectifying curve, if both sides of this equation are subjected to the inner
product with $\alpha $, the last equation is obtained as 
\begin{equation}
\varepsilon _{1}\kappa \left\langle N,\alpha \right\rangle =\left( \frac{%
\rho ^{\prime }}{v}\right) ^{\prime }\left\langle \beta ,\alpha
\right\rangle +\left( \frac{\rho ^{\prime }}{v}+\left( \frac{\rho }{v}%
\right) ^{\prime }+\frac{\rho }{v}\right) \left\langle t_{\beta },\alpha
\right\rangle +\varepsilon _{1}k\frac{\rho }{v}\left\langle n_{\beta
},\alpha \right\rangle.  \tag{3.24}
\end{equation}%

Since $\beta $ is a rectifying curve, we can say that $\left\langle N,\alpha
\right\rangle =0.$ 
\begin{equation}
\left\langle \beta ,\alpha \right\rangle =\rho \left\Vert \beta \right\Vert ^{2};\left\langle t_{\beta
},\alpha \right\rangle =\rho \left\langle t_{\beta },\beta \right\rangle
=\mu _{1};\left\langle n_{\beta },\alpha \right\rangle =\rho \left\langle
n_{\beta },\beta \right\rangle =\varepsilon _{1}\mu _{2},  \tag{3.25}
\end{equation}%
from (3.25), by making necessary calculation, we have%
\begin{equation}
0=\left( \frac{\rho ^{\prime }}{v}\right) ^{\prime }\rho \left\Vert \beta
\right\Vert ^{2}+\left( \frac{\rho ^{\prime }}{v}+\left( \frac{\rho }{v}%
\right) ^{\prime }+\frac{\rho }{v}\right) \mu _{1}+k\frac{\rho }{v}\mu _{2}.
\tag{3.26}
\end{equation}%

Here, if the $\beta $ is considered to be a rectifying curve, that is, if $%
\mu _{2}=0$ and $\left\Vert \beta \right\Vert ^{2}=2\int \mu _{1}ds$, the
last equation is written as%
\begin{equation}
0=\rho \left( \frac{\rho ^{\prime }}{v}\right) ^{\prime }(s^{2}+cs+d)+\left( 
\frac{\rho ^{\prime }}{v}+\left( \frac{\rho }{v}\right) ^{\prime }+\frac{%
\rho }{v}\right) (s+c^{\ast \ast }).  \tag{3.27}
\end{equation}
\end{proof}

\begin{definition}
A curve $\alpha $ in the 4-dimensional pseudo-Galilean space $G_{1}^{4}$ is
called an admissible slant helix if the normal lines of $\alpha $ make a
constant angle with a fixed direction.
\end{definition}
\end{theorem}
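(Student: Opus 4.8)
The plan is to exploit the factorization $\alpha(t)=\rho(t)\beta(t)$ and to transport the rectifying condition $\langle \alpha,N\rangle=0$ through the Frenet apparatus of $\alpha$, expressed in terms of the frame of $\beta$. First I would differentiate $\alpha=\rho\beta$ to get $\alpha'=\rho'\beta+\rho\beta'$; reading off the speed $v=\|\alpha'\|=\sqrt{\rho'^{2}+\rho^{2}}$ gives the unit tangent $T=\frac{\rho'}{v}\beta+\frac{\rho}{v}\beta'$, which is (3.18). Differentiating once more and applying $T'=\varepsilon_1\kappa N$ from (2.11), together with the expansion $\beta''=t_\beta+\varepsilon_1 k\,n_\beta$ obtained by decomposing $\beta''$ in the frame $\{t_\beta,n_\beta,b_{1\beta},b_{2\beta}\}$ and computing the inner products as in (3.21)--(3.22), produces the key identity (3.23) that writes $\varepsilon_1\kappa N$ as a linear combination of $\beta$, $t_\beta$ and $n_\beta$.

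Next I would take the pseudo-Galilean inner product of (3.23) with the position vector $\alpha$. Since $\alpha$ is rectifying, the left-hand side vanishes because $\langle N,\alpha\rangle=0$. The three inner products on the right are then supplied by (3.25): $\langle\beta,\alpha\rangle=\rho\|\beta\|^{2}$, $\langle t_\beta,\alpha\rangle=\mu_1$ and $\langle n_\beta,\alpha\rangle=\varepsilon_1\mu_2$, where $\mu_1,\mu_2$ are the tangential and normal components of $\alpha$ from (3.1). Collecting terms yields the intermediate relation (3.26). Finally, using that $\beta$ is itself rectifying forces $\mu_2=0$, while Theorem 2 provides $\|\beta\|^{2}=2\int\mu_1\,ds=s^{2}+cs+d$ and $\mu_1=s+c^{\ast\ast}$; substituting these into (3.26) gives the asserted equation (3.27).

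The main obstacle, and the step I would treat most carefully, is justifying the first claim of the theorem, namely that $\beta$ really is a unit speed rectifying curve rather than merely assuming it. Because the metric of $G_{1}^{4}$ is degenerate and indefinite, with the components weighted by the signs $\varepsilon_1,\varepsilon_2,\varepsilon_3$, one must verify that the reduction $\|\alpha'\|=\sqrt{\rho'^{2}+\rho^{2}}$ is legitimate---this forces the normalization $\langle\beta,\beta\rangle=1$, $\langle\beta,\beta'\rangle=0$, $\langle\beta',\beta'\rangle=1$---and that the vanishing of the normal component of $\alpha$ genuinely transfers to the vanishing of $\mu_2$ for $\beta$. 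Once these sign and orthogonality bookkeeping checks are in place, the remaining passage from (3.23) to (3.27) is a routine substitution.
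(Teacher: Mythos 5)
Your proposal follows essentially the same route as the paper's proof: differentiating $\alpha=\rho\beta$ to obtain (3.18), differentiating again and substituting the decomposition $\beta''=t_\beta+\varepsilon_1 k\,n_\beta$ to reach (3.23), then pairing with $\alpha$ and using the rectifying conditions $\langle N,\alpha\rangle=0$, $\mu_2=0$, $\|\beta\|^2=s^2+cs+d$ to land on (3.27). Your added caution about verifying the normalization of $\beta$ under the degenerate metric is a reasonable refinement of a point the paper leaves implicit, but the argument itself is the same.
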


\begin{theorem}
Let $\alpha $ be an admissible slant helix with non-vanishing curvatures in $%
G_{1}^{4}.$ Then,

1) there is relation among curvatures of \ $\alpha $ as 
\begin{equation*}
\frac{\tau }{\sigma }=\frac{c}{\cos w}=\text{constant, }w\neq k\frac{\pi }{2}%
,k\in 
\mathbb{Z}
.
\end{equation*}

2) fixed direction of this slant helix can be written as%
\begin{equation*}
U=\cos wN+\frac{\tau }{\sigma }\cos wB_{2}.
\end{equation*}
\end{theorem}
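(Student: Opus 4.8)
The plan is to exploit the defining property of a slant helix: the existence of a fixed (constant) unit direction $U$ such that the angle between the principal normal $N$ and $U$ is constant, i.e. $\langle N,U\rangle=\cos w$ is constant along $\alpha$. First I would expand the constant vector $U$ in the Frenet frame,
\begin{equation*}
U=a_{1}T+a_{2}N+a_{3}B_{1}+a_{4}B_{2},
\end{equation*}
with scalar coefficients $a_{i}=a_{i}(s)$, and record that the angle condition fixes the normal component, $a_{2}=\cos w$ (up to the sign factor $\varepsilon _{1}$ coming from $\langle N,N\rangle =\varepsilon _{1}$ in (2.7)).

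Next I would differentiate the angle condition and use $U^{\prime }=0$ together with the Frenet equation $N^{\prime }=\varepsilon _{2}\tau B_{1}$ from (2.11): since $\langle N,U\rangle $ is constant and $U$ is constant, $0=\langle N^{\prime },U\rangle =\varepsilon _{2}\tau \langle B_{1},U\rangle $, and $\tau \neq 0$ forces the $B_{1}$-component of $U$ to vanish, i.e. $a_{3}=0$. I would then impose $U^{\prime }=0$ directly: differentiating the frame expansion and substituting (2.11) yields four scalar equations, one per frame vector, namely $a_{1}^{\prime }=0$, $a_{2}^{\prime }+a_{1}\varepsilon _{1}\kappa -a_{3}\varepsilon _{2}\tau =0$, $a_{3}^{\prime }+a_{2}\varepsilon _{2}\tau -a_{4}\varepsilon _{2}\sigma =0$, and $a_{4}^{\prime }+a_{3}\varepsilon _{3}\sigma =0$.

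With $a_{3}=0$ in hand, the $N$-component equation collapses to $a_{1}\varepsilon _{1}\kappa =0$, so $\kappa \neq 0$ gives $a_{1}=0$; the $B_{1}$-component equation then reads $\varepsilon _{2}(\cos w\,\tau -a_{4}\sigma )=0$, yielding $a_{4}=\tfrac{\tau }{\sigma }\cos w$. This already produces the claimed fixed direction $U=\cos w\,N+\tfrac{\tau }{\sigma }\cos w\,B_{2}$ of part 2. Finally, the $B_{2}$-component equation reduces to $a_{4}^{\prime }=0$ (because $a_{3}=0$), so $a_{4}=\tfrac{\tau }{\sigma }\cos w$ is constant; since $\cos w$ is a nonzero constant (as $w\neq k\pi /2$), this is exactly the relation $\tfrac{\tau }{\sigma }=\tfrac{c}{\cos w}=\text{const}$ of part 1, with $c:=a_{4}$.

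The main obstacle I anticipate is conceptual rather than computational: making the notion of a constant ``angle'' and a ``fixed direction'' precise in the degenerate pseudo-Galilean metric, and keeping the sign factors $\varepsilon _{1},\varepsilon _{2},\varepsilon _{3}$ consistent throughout. In particular I must verify that $\langle N,U\rangle =\cos w$ constant is the correct reading of the slant-helix definition given before the theorem, and that the four component equations above are genuinely equivalent to $U^{\prime }=0$ in this frame. Once the bookkeeping of these $\varepsilon $'s is settled, parts 1 and 2 follow simultaneously from the resulting linear system.
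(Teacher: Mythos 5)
Your proposal is correct and follows essentially the same route as the paper: expand the fixed direction $U$ in the Frenet frame, differentiate $\langle N,U\rangle=\cos w$ to conclude $B_{1}\perp U$ (so the $B_{1}$-coefficient vanishes), then impose $U^{\prime}=0$ componentwise via (2.11) to force $a_{1}=0$, $a_{4}=\tfrac{\tau}{\sigma}\cos w$, and $a_{4}^{\prime}=0$, which gives both the constancy of $\tfrac{\tau}{\sigma}$ and the stated form of $U$. The only difference is that you write out the four component equations explicitly (and flag the $\varepsilon_{i}$ sign bookkeeping), where the paper compresses this into ``differentiating (3.30), we easily obtain.''
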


\begin{proof}
From previous definition of slant helix, we can write%
\begin{equation}
\left\langle N,U\right\rangle _{G_{1}^{4}}=\cos w,  \tag{3.28}
\end{equation}%
where $U$ is a constant vector and $w\neq k\frac{\pi }{2}$, $ k\in 
\mathbb{Z}$ is a constant
angle. If we take derivative of (3.28) and consider the Frenet equations
(2.11), we get%
\begin{equation}
\left\langle \varepsilon _{2}\tau B_{1},U\right\rangle _{G_{1}^{4}}=0. 
\tag{3.29}
\end{equation}%

The equation (3.29) implies that $B_{1}\perp U$. Therefore can compose $U$ as%
\begin{equation}
U=u_{1}T+u_{2}N+u_{3}B_{2}.  \tag{3.30}
\end{equation}%

Differentiating (3.30), we easily obtain%
\begin{equation*}
\frac{\tau }{\sigma }=\frac{c}{\cos w}=\text{constant},
\end{equation*}%
\begin{equation*}
u_{1}=0,u_{2}=\cos w=const.,u_{3}=\frac{\tau }{\sigma }\cos w=\text{constant},
\end{equation*}%
where $\frac{\tau }{\sigma }=constant.$ Thus, the proof is completed.
\end{proof}

\begin{definition}
A curve $\alpha $ in pseudo-Galilean space $G_{1}^{4}$ is called an
admissible 3-type slant helix if the trinormal lines of make a constant
angle with a fixed direction.

\begin{theorem}
Let $\alpha $ be an admissible 3-type slant helix with non-vanishing
curvatures in $G_{1}^{4}.$ Then,

1) there is a relation among curvatures of $\alpha $ as%
\begin{equation*}
\cos \phi \left( \frac{\sigma }{\tau }\right) ^{\prime }=a\kappa ,a\in 
\mathbb{R}
.
\end{equation*}

2) fixed direction of this helix can be written as%
\begin{equation*}
U=bT+\frac{\sigma }{\tau }\cos \phi N+\cos \phi B_{2},
\end{equation*}

where $\phi \neq k\frac{\pi }{2},k\in 
\mathbb{Z}
$ is a constant angle, $a$ and $b$ are real numbers.

\begin{proof}
From previous definition, we write%
\begin{equation}
\left\langle B_{2},U\right\rangle _{G_{1}^{4}}=\cos \phi,  \tag{3.31}
\end{equation}%
where $U$ is a constant vector and $\phi \neq k\frac{\pi }{2}(k\in 
\mathbb{Z}
$) is a constant angle. If we take derivative of (3.31) and by considering the
Frenet equations (2.11), we get%
\begin{equation}
\left\langle -\varepsilon _{2}\sigma B_{1},U\right\rangle
_{G_{1}^{4}}=0\Rightarrow B_{1}\perp U  \tag{3.32}
\end{equation}%
and we compose constant vector $U$ as%
\begin{equation}
U=v_{1}T+v_{2}N+v_{3}B_{2}.  \tag{3.33}
\end{equation}%

Differentiating both sides of (3.33), we easily obtain%
\begin{equation*}
v_{1}=b,v_{2}=\frac{\sigma }{\tau }\cos \phi ,v_{3}=\cos \phi \text{ and }%
\cos \phi \left( \frac{\sigma }{\tau }\right) ^{\prime }=a\kappa ,a\in 
\mathbb{R}%
\end{equation*}%
which completes the proof.
\end{proof}
\end{theorem}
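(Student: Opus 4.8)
The plan is to mirror the argument used just above for the ordinary admissible slant helix, with the trinormal $B_2$ now playing the distinguished role that $N$ played there. The starting point is the defining relation $\left\langle B_2,U\right\rangle_{G_1^4}=\cos\phi$ for a fixed (constant) vector $U$ and constant angle $\phi\neq k\pi/2$. First I would differentiate this identity with respect to arclength. Since $U$ is constant and the last Frenet equation in (2.11) gives $B_2^{\prime}=-\varepsilon_2\sigma B_1$, the derivative collapses to $-\varepsilon_2\sigma\left\langle B_1,U\right\rangle=0$; because $\sigma\neq 0$ this forces $B_1\perp U$, exactly as recorded in (3.32). This orthogonality is the key structural fact: it says $U$ has no $B_1$-component, so I may write $U=v_1T+v_2N+v_3B_2$ as in (3.33).

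The second step is to differentiate this decomposition and exploit $U^{\prime}=0$. Substituting the Frenet equations (2.11) for $T^{\prime},N^{\prime},B_2^{\prime}$ and collecting the coefficient of each frame vector yields four scalar equations: the $T$- and $B_2$-coefficients give $v_1^{\prime}=0$ and $v_3^{\prime}=0$, so both are constants; the $B_1$-coefficient gives $\varepsilon_2(v_2\tau-v_3\sigma)=0$, hence $v_2\tau=v_3\sigma$; and the $N$-coefficient gives $v_1\varepsilon_1\kappa+v_2^{\prime}=0$. Writing $v_1=b$ for the $T$-constant and using $\left\langle B_2,U\right\rangle=\cos\phi$ together with the orthonormality relations (2.7) to identify $v_3=\cos\phi$, the $B_1$-relation immediately produces $v_2=(\sigma/\tau)\cos\phi$, which is precisely the claimed expression for the fixed direction in part (2).

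Finally, to obtain the curvature relation in part (1), I would feed $v_2=(\sigma/\tau)\cos\phi$ into the $N$-coefficient equation. Differentiating and using that $\phi$ is constant gives $v_2^{\prime}=\cos\phi\,(\sigma/\tau)^{\prime}$, and combining with $v_1\varepsilon_1\kappa+v_2^{\prime}=0$ and $v_1=b$ yields $\cos\phi\,(\sigma/\tau)^{\prime}=-b\varepsilon_1\kappa$. Setting $a=-b\varepsilon_1\in\mathbb{R}$ produces exactly $\cos\phi\,(\sigma/\tau)^{\prime}=a\kappa$, completing the proof. I expect no genuine obstacle, since the argument is a direct computation; the only point demanding care is the consistent bookkeeping of the signs $\varepsilon_1,\varepsilon_2,\varepsilon_3$ from (2.7), so that $v_3$ is read off correctly from $\left\langle B_2,U\right\rangle$ and the constant $a$ absorbs the residual sign $\varepsilon_1$. (A converse, if desired, follows by reversing these steps: starting from the stated $U$ and verifying through (2.11) that $U^{\prime}=0$ and that $\left\langle B_2,U\right\rangle$ is constant.)
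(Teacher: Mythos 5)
Your proposal is correct and follows essentially the same route as the paper: differentiate $\left\langle B_{2},U\right\rangle=\cos\phi$ to get $B_{1}\perp U$, decompose $U=v_{1}T+v_{2}N+v_{3}B_{2}$, and read off the coefficients from $U^{\prime}=0$; you merely spell out the four scalar equations that the paper compresses into ``we easily obtain.'' The only caveat (shared with the paper) is that $\left\langle B_{2},U\right\rangle=\varepsilon_{3}v_{3}$, so strictly $v_{3}=\varepsilon_{3}\cos\phi$, a sign your closing remark on $\varepsilon$-bookkeeping already anticipates.
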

\end{definition}

\begin{corollary}
If an admissible 3-type slant helix with non-vanishing curvatures $\alpha $
is a $W$-curve in $G_{1}^{4},$ then fixed direction of $\alpha $ as follow:%
\begin{equation}
U=\frac{\sigma }{\tau }\cos \phi N+\cos \phi B_{2} \tag{3.34}
\end{equation}
where $\frac{\sigma }{\tau }$ and $\cos \phi $ are constants.
\end{corollary}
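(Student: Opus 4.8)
The plan is to specialize the result of the preceding Theorem (the characterization of a $3$-type slant helix) to the case of a $W$-curve and read off the fixed direction directly. Recall that a $W$-curve is a curve all of whose Frenet curvatures are constant, so in particular $\kappa,\tau,\sigma$ are all constants. From part~(1) of the previous Theorem we have the relation
\begin{equation*}
\cos \phi \left( \frac{\sigma }{\tau }\right) ^{\prime }=a\kappa ,\qquad a\in \mathbb{R},
\end{equation*}
and from part~(2) the fixed direction is
\begin{equation*}
U=bT+\frac{\sigma }{\tau }\cos \phi \, N+\cos \phi \, B_{2}.
\end{equation*}
First I would observe that since $\tau$ and $\sigma$ are constant along a $W$-curve, the ratio $\sigma/\tau$ is constant, hence $(\sigma/\tau)^{\prime }=0$. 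Substituting this into the relation in part~(1) forces $a\kappa =0$; since $\kappa \neq 0$ by hypothesis (non-vanishing curvatures), we conclude $a=0$, so the constant $a$ drops out and $\sigma/\tau$ is indeed a genuine constant of the curve.

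Next I would determine the coefficient $b$. The key step is to show $b=0$ for a $W$-curve. I would do this by differentiating the composition $U=bT+\tfrac{\sigma}{\tau}\cos\phi\,N+\cos\phi\,B_{2}$ and imposing $U^{\prime }=0$ (since $U$ is a fixed, i.e.\ constant, direction). Using the Frenet-Serret equations (2.11) together with the constancy of $\kappa,\tau,\sigma$ and of the coefficients $b$, $\tfrac{\sigma}{\tau}\cos\phi$, $\cos\phi$, differentiation yields
\begin{equation*}
U^{\prime }=b\,\varepsilon _{1}\kappa N+\frac{\sigma }{\tau }\cos \phi \,\varepsilon _{2}\tau B_{1}+\cos \phi \,(-\varepsilon _{2}\sigma B_{1}).
\end{equation*}
Collecting components, the $B_{1}$-terms cancel automatically because $\tfrac{\sigma}{\tau}\cos\phi\cdot \varepsilon_2\tau=\cos\phi\cdot\varepsilon_2\sigma$, which is consistent with the previous Theorem, while the $N$-component gives $b\,\varepsilon _{1}\kappa =0$. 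Since $\varepsilon _{1}=\pm 1$ and $\kappa \neq 0$, this forces $b=0$.

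Substituting $b=0$ into the expression for $U$ from part~(2) then gives exactly
\begin{equation*}
U=\frac{\sigma }{\tau }\cos \phi \, N+\cos \phi \, B_{2},
\end{equation*}
with $\tfrac{\sigma}{\tau}$ and $\cos\phi$ constant, which is the claimed formula (3.34). The main obstacle I anticipate is the justification that $b$ must vanish: one must be careful that the $B_{1}$-components of $U^{\prime }$ cancel identically (so that no further constraint is hidden there) and that the isolated $N$-component equation $b\,\varepsilon_1\kappa=0$ is the only residual condition. If instead one only assumes $\tau,\sigma$ constant without $\kappa$ constant, the same $N$-component computation still yields $b\varepsilon_1\kappa=0$ and hence $b=0$, so the conclusion is robust; the essential inputs are merely the constancy of the ratio $\sigma/\tau$ (forcing $a=0$) and the nonvanishing of $\kappa$ (forcing $b=0$).
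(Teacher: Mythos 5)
Your argument is correct and follows essentially the same route the paper intends: the corollary is stated without proof as an immediate specialization of the preceding theorem, where the $N$-component of $U'=0$ gives $b\varepsilon_{1}\kappa+\bigl(\tfrac{\sigma}{\tau}\cos\phi\bigr)'=0$, so constancy of $\sigma/\tau$ for a $W$-curve together with $\kappa\neq 0$ forces $b=0$ (equivalently $a=0$), exactly as you compute. Your separate verification that the $B_{1}$-components cancel identically is just a re-derivation of the same Frenet component equations used in the theorem's proof, so there is no substantive difference in approach.
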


\begin{theorem}
There isn't any admissible curve in $G_{1}^{4}$ such that the bi-normal
lines of it make a constant angle with a fixed direction.

\begin{proof}
Let us assume that there is a curve which satisfies hypothesis of the
theorem. Thus we can write%
\begin{equation}
\left\langle U,B_{1}\right\rangle _{G_{1}^{4}}=\cos \theta,   \tag{3.35}
\end{equation}%
where $U$ is constant vector and $\theta $ is a constant angle. Then, by differentiating both sides of (3.35) and considering Frenet equations
(2.11), we obtain%
\begin{equation*}
\left\langle U,-\varepsilon _{2}\tau N+\varepsilon _{3}\sigma
B_{2}\right\rangle _{G_{1}^{4}}=0
\end{equation*}%
which implies that $N\perp U$ and $B_{2}\perp U.$ Therefore, we can compose $%
U$ as%
\begin{equation}
U=w_{1}T+w_{2}B_{1}.  \tag{3.36}
\end{equation}%

Differentiating (3.36), we easily have $w_{1}=w_{2}=0,$ which is a
contradiction.
\end{proof}
\end{theorem}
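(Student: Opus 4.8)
The plan is to argue by contradiction, mirroring the scheme already used for the slant-helix theorems. Suppose, to the contrary, that such a curve exists; then there is a nonzero fixed vector $U$ and a constant angle $\theta$ with
\[
\left\langle U,B_{1}\right\rangle _{G_{1}^{4}}=\cos \theta .
\]
Differentiating with respect to arclength and substituting the third Frenet equation $B_{1}^{\prime }=-\varepsilon _{2}\tau N+\varepsilon _{3}\sigma B_{2}$ from (2.11), the constancy of the left-hand side collapses to the single scalar relation
\[
-\varepsilon _{2}\tau \left\langle U,N\right\rangle _{G_{1}^{4}}+\varepsilon _{3}\sigma \left\langle U,B_{2}\right\rangle _{G_{1}^{4}}=0 .
\]
From here the goal is to force both $\langle U,N\rangle=0$ and $\langle U,B_{2}\rangle =0$, after which $U$ loses its $N$- and $B_{2}$-components.

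I expect the passage from this one relation to the two separate orthogonalities to be the main obstacle, because a single linear dependence between the two inner products does not by itself annihilate each. My plan to close this gap is to use once more that $U$ is fixed: since $\langle U,X\rangle'=\langle U,X'\rangle$, the equations $N^{\prime}=\varepsilon_{2}\tau B_{1}$ and $B_{2}^{\prime}=-\varepsilon_{2}\sigma B_{1}$ together with $\langle U,B_{1}\rangle=\cos\theta$ yield
\[
\left\langle U,N\right\rangle ^{\prime }=\varepsilon _{2}\tau \cos \theta ,\qquad \left\langle U,B_{2}\right\rangle ^{\prime }=-\varepsilon _{2}\sigma \cos \theta .
\]
Differentiating the displayed relation and feeding in these derivatives produces a constraint that, under $\kappa ,\tau ,\sigma \neq 0$, I expect to be incompatible unless $\left\langle U,N\right\rangle \equiv \left\langle U,B_{2}\right\rangle \equiv 0$; alternatively one records the functional independence of $\tau$ and $\sigma$ as a standing hypothesis, which makes the two orthogonalities immediate.

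Granting $U\perp N$ and $U\perp B_{2}$, the remainder is a routine component computation, exactly as in (3.36). Since $U$ now has no $N$- or $B_{2}$-part, I would write $U=w_{1}T+w_{2}B_{1}$ and impose $U^{\prime}=0$; reading off the $T,N,B_{1},B_{2}$ components against (2.11) gives
\[
w_{1}^{\prime }=0,\quad \varepsilon _{1}\kappa w_{1}-\varepsilon _{2}\tau w_{2}=0,\quad w_{2}^{\prime }=0,\quad \varepsilon _{3}\sigma w_{2}=0 .
\]
The last equation and $\sigma \neq 0$ force $w_{2}=0$, and then $\kappa \neq 0$ in the second forces $w_{1}=0$, so $U=0$. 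This contradicts $U$ being a nonzero fixed direction, and the theorem follows. The only genuinely delicate point is the extraction of the two orthogonality conditions; once they are in hand, the vanishing of $U$ is forced purely algebraically by the nonvanishing of the curvatures.
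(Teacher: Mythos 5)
Your proposal follows the same route as the paper's proof: differentiate $\left\langle U,B_{1}\right\rangle =\cos \theta $, try to conclude $U\perp N$ and $U\perp B_{2}$, decompose $U=w_{1}T+w_{2}B_{1}$, and force $w_{1}=w_{2}=0$ from $U^{\prime }=0$. Your final component computation coincides with the paper's step at (3.36) and is correct granted the orthogonalities and $\kappa ,\sigma \neq 0$. You have also put your finger on exactly the step the paper glosses over: the single scalar relation $-\varepsilon _{2}\tau \left\langle U,N\right\rangle +\varepsilon _{3}\sigma \left\langle U,B_{2}\right\rangle =0$ does not by itself imply that both inner products vanish, yet the paper asserts precisely that.

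However, the repair you propose does not close this gap. Write $f=\left\langle U,N\right\rangle $ and $g=\left\langle U,B_{2}\right\rangle $; using your relations $f^{\prime }=\varepsilon _{2}\tau \cos \theta $ and $g^{\prime }=-\varepsilon _{2}\sigma \cos \theta $, differentiating $-\varepsilon _{2}\tau f+\varepsilon _{3}\sigma g=0$ gives
\begin{equation*}
-\varepsilon _{2}\tau ^{\prime }f+\varepsilon _{3}\sigma ^{\prime }g=\left( \tau ^{2}+\varepsilon _{2}\varepsilon _{3}\sigma ^{2}\right) \cos \theta .
\end{equation*}
Together with the original relation this is a $2\times 2$ linear system for $(f,g)$ whose right-hand side is generally nonzero: when $\sigma \tau ^{\prime }-\tau \sigma ^{\prime }\neq 0$ it has a unique solution proportional to $\cos \theta $, which does not vanish unless $\cos \theta =0$ or $\tau ^{2}+\varepsilon _{2}\varepsilon _{3}\sigma ^{2}=0$. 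So the differentiation determines $f$ and $g$ rather than annihilating them, and neither the two orthogonalities nor any contradiction follows at this stage; one would have to differentiate further and exhibit a genuine inconsistency, which neither you nor the paper does. Your fallback of assuming ``functional independence of $\tau $ and $\sigma $'' also does not yield the orthogonalities immediately, because the coefficients $f$ and $g$ in the relation are functions of $s$, not constants. A further caveat affecting both arguments: the last step needs $\sigma \neq 0$, and if $\sigma \equiv 0$ then $B_{2}$ is itself a fixed direction making a constant angle with $B_{1}$, so the theorem requires a nonvanishing-curvature hypothesis that is absent from its statement.
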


\begin{definition}
Let $c$ and $r>0$ given and $f(s)=\left( \alpha -c\right) ^{2}.$ Then, $%
\alpha $ has $j$ order spherical contact with sphere of radius $r$ and
center $c$ at $s=s_{0}$ if 
\begin{equation*}
f(s_{0})=r^{2},f^{\prime }(s_{0})=f^{\prime \prime }(s_{0})=...=f^{\left(
j\right) }(s_{0})=0.
\end{equation*}
\end{definition}

Recall that, in the Euclidean space, for an arbitrary curve $\alpha $ lies
on a sphere with center $c$, then $\left( \alpha -c\right) ^{2}$ is constant
and so we consider the previous definition of contact \cite{14}.

\begin{theorem}
Let $\alpha (s)$ $=\left( s,y(s),z(s),w(s)\right) $ be an admissible curve
parametrized by arclength $s$ in $G_{1}^{4}$. If $\alpha $ lies on the
pseudo-Galilean sphere in $G_{1}^{4}.$ Then, the center is given as%
\begin{equation*}
c(s)=\alpha (s)+\rho (s)N(s)+\varepsilon _{1}\frac{\rho ^{\prime }}{\tau (s)}%
B_{1}(s)+\frac{\varepsilon _{1}\varepsilon _{2}}{\sigma (s)}\left( \tau
(s)\rho (s)+\left( \frac{\rho ^{\prime }}{\sigma }\right) ^{\prime }\right)
B_{2}(s)
\end{equation*}
where $\rho (s)=\frac{1}{\kappa (s)}.$
\end{theorem}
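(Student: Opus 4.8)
The claim is that if an admissible curve $\alpha$ lies on a pseudo-Galilean sphere, then its center $c$ admits the stated Frenet-frame expansion. The plan is to exploit the defining property of a sphere: the center $c$ is a fixed point, so the vector $c(s)-\alpha(s)$ connecting $\alpha$ to $c$ has constant (signed) squared norm. Equivalently, writing the center as a Frenet-frame decomposition
\[
c(s)=\alpha(s)+a_{1}(s)T(s)+a_{2}(s)N(s)+a_{3}(s)B_{1}(s)+a_{4}(s)B_{2}(s),
\]
the essential constraint is that $c$ is \emph{constant}, i.e. $c'(s)=0$. First I would differentiate this expression using the Frenet--Serret equations (2.11), collect the coefficients of $T,N,B_{1},B_{2}$ separately, and set each to zero, obtaining a coupled linear system in the coefficient functions $a_{i}$ and their derivatives analogous to (3.2a)--(3.2d).

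\textbf{Solving the coefficient system.} Since $\alpha'=T$, the differentiated relation $c'=0$ reads
\[
0=\bigl(1+a_{1}'\bigr)T+\bigl(\varepsilon_{1}\kappa a_{1}+a_{2}'-\varepsilon_{2}\tau a_{3}\bigr)N+\bigl(\varepsilon_{2}\tau a_{2}+a_{3}'-\varepsilon_{2}\sigma a_{4}\bigr)B_{1}+\bigl(\varepsilon_{3}\sigma a_{3}+a_{4}'\bigr)B_{2}.
\]
The intended solution has $a_{1}=0$, so the $T$-component forces the standard spherical constraint to be expressed through the normal directions rather than the tangential one; this is the key simplification that must be justified, presumably from the admissibility/parametrization setup where the first coordinate is the arclength. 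With $a_{1}=0$, the $N$-equation gives $a_{2}'=\varepsilon_{2}\tau a_{3}$, the $B_{1}$-equation gives $\varepsilon_{2}\tau a_{2}+a_{3}'=\varepsilon_{2}\sigma a_{4}$, and the $B_{2}$-equation gives $a_{4}'=-\varepsilon_{3}\sigma a_{3}$. I would then identify $a_{2}=\rho=1/\kappa$ as the first radius-of-curvature coefficient (this is where the hypothesis $\rho=1/\kappa$ enters), solve $a_{3}=\varepsilon_{1}\rho'/\tau$ from the $N$-relation, and finally extract $a_{4}$ from the $B_{1}$-relation, yielding the displayed coefficient $\tfrac{\varepsilon_{1}\varepsilon_{2}}{\sigma}\bigl(\tau\rho+(\rho'/\sigma)'\bigr)$ after substituting and rearranging.

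\textbf{Main obstacle.} The hardest and most delicate step is getting the coefficient of $B_{2}$ exactly right, because it requires substituting $a_{3}=\varepsilon_{1}\rho'/\tau$ into the $B_{1}$-equation $\varepsilon_{2}\tau a_{2}+a_{3}'=\varepsilon_{2}\sigma a_{4}$ and carefully tracking the sign factors $\varepsilon_{1},\varepsilon_{2},\varepsilon_{3}$ and the relation $\varepsilon_{2}=-\varepsilon_{1}$ recorded after (2.9). The appearance of $(\rho'/\sigma)'$ rather than $(\rho'/\tau)'$ in the stated answer suggests a change of variable or an intermediate identity linking the curvatures must be invoked, so I would reconcile the derivative terms against the stated form and verify consistency with the $B_{2}$-equation $a_{4}'=-\varepsilon_{3}\sigma a_{3}$ as a final compatibility check that closes the proof.
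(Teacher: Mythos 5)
Your strategy of writing $c(s)=\alpha(s)+a_{1}T+a_{2}N+a_{3}B_{1}+a_{4}B_{2}$ and imposing $c'(s)=0$ is the standard Euclidean route, but it breaks down in $G_{1}^{4}$, and the two places where it breaks are exactly the two places you wave at rather than argue. First, the $T$-component of your own displayed equation is $1+a_{1}'=0$: because the Frenet matrix in (2.11) has an identically zero first column, differentiating $a_{2}N+a_{3}B_{1}+a_{4}B_{2}$ can never produce a $T$-term to cancel the $+1$ coming from $\alpha'=T$. So $c'=0$ forces $a_{1}=-s+\mathrm{const}$, which is incompatible with the claimed $a_{1}=0$; no appeal to the admissibility of the parametrization repairs this. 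Second, and for the same reason, the value $a_{2}=1/\kappa$ is not derivable from your system at all: in the Euclidean computation $a_{2}=1/\kappa$ falls out of the $T$-equation $1+a_{1}'-\kappa a_{2}=0$, but here that equation has degenerated to $1+a_{1}'=0$, so you are left simply asserting $a_{2}=\rho$ "as the first radius-of-curvature coefficient." The remaining relations you list ($a_{2}'=\varepsilon_{2}\tau a_{3}$, etc.) then determine $a_{3},a_{4}$ only up to the undetermined $a_{2}$, so the proof does not close.

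The paper avoids both problems by never imposing $c'=0$ componentwise. It works with the contact function $f(s)=\left\langle c-\alpha ,c-\alpha \right\rangle _{G_{1}^{4}}=\pm r^{2}$ and the conditions $f'=f''=f'''=f^{(iv)}=0$, using the degenerate pseudo-Galilean inner product throughout. There $f'=0$ gives only $\left\langle T,c-\alpha \right\rangle =0$, i.e. $c_{1}=0$ (no contradiction, since the metric sees only the first coordinate against the non-isotropic $T$), and $c_{2}=1/\kappa$ comes from $f''=0$, where the extra term $\left\langle T,T\right\rangle =1$ supplies the $+1$ that your tangential equation could not absorb; $c_{3}$ and $c_{4}$ then follow by further differentiation. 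If you want to salvage your approach, you must replace the vector equation $c'=0$ by these scalar contact conditions. Your closing observation that the stated coefficient contains $(\rho'/\sigma)'$ where the computation naturally produces $(\rho'/\tau)'$ is well taken: the paper's own equations (3.43)--(3.44) indeed yield $\left(\frac{1}{\tau}\left(\frac{1}{\kappa}\right)'\right)'$, so the $\sigma$ in the theorem statement appears to be a typo rather than a change of variable you need to account for.
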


\begin{proof}
Let $\alpha (s)$ $=\left( s,y(s),z(s),w(s)\right) $ be a curve parametrized
by arclength $s$ in $G_{1}^{4}$ with the Frenet-Serret equations (2.11).
From previous definition, we can write%
\begin{equation}
f(s)=\left\langle c(s)-\alpha (s),c(s)-\alpha (s)\right\rangle
_{G_{1}^{4}}=\pm r^{2}  \tag{3.37}
\end{equation}%
and%
\begin{equation*}
f^{\prime }(s)=f^{\prime \prime }(s)=f^{\prime \prime \prime }(s)=f^{\left(
\imath v\right) }(s)=0.
\end{equation*}%

Thus, if we take derivative of (3.37), we get%
\begin{equation}
\left\langle -T(s),c(s)-\alpha (s)\right\rangle _{G_{1}^{4}}=0  \tag{3.38}
\end{equation}%
and again by differentiating of (3.38), we have%
\begin{equation}
-\varepsilon _{1}\kappa (s)\left\langle N(s),c(s)-\alpha (s)\right\rangle
_{G_{1}^{4}}+1=0.  \tag{3.39}
\end{equation}%

We may compose the vector%
\begin{equation}
c(s)-\alpha (s)=c_{1}(s)T(s)+c_{2}(s)N(s)+c_{3}(s)B_{1}(s)+c_{4}(s)B_{2}(s).
\tag{3.40}
\end{equation}%

From (3.38), we easily obtain $c_{1}(s).$ Using (3.39) in (3.40), we have%
\begin{equation}
c_{2}(s)=\frac{1}{\kappa (s)}.  \tag{3.41}
\end{equation}%

Differentiating (3.40), we obtain%
\begin{equation*}
c_{3}(s) =\varepsilon _{1}\frac{1}{\tau (s)}\left( \frac{1}{\kappa (s)}%
\right) ^{\prime },  \tag{3.42} \\
\end{equation*}%
\begin{equation*}
c_{4}(s) =\frac{\varepsilon _{1}\varepsilon _{2}}{\sigma (s)}\left( \frac{%
\tau (s)}{\kappa (s)}+\left( \frac{1}{\tau (s)}\left( \frac{1}{\kappa (s)}%
\right) ^{\prime }\right) ^{\prime }\right) .  \tag{3.43}
\end{equation*}%

If we consider the equations $c_{1}(s),c_{2}(s),c_{3}(s)$ and $c_{4}(s)$ in
(3.40) and denote $\rho =\frac{1}{\kappa },$ we get 
\begin{equation}
c(s)=\alpha (s)+\frac{1}{\kappa }N(s)+\varepsilon _{1}\frac{1}{\tau }\left( 
\frac{1}{\kappa }\right) ^{\prime }B_{1}(s)+\frac{\varepsilon
_{1}\varepsilon _{2}}{\sigma }\left( \frac{\tau }{\kappa }+\left( \frac{1}{%
\tau }\left( \frac{1}{\kappa }\right) ^{\prime }\right) ^{\prime }\right)
B_{2}(s).  \tag{3.44}
\end{equation}%

Thus, the proof is completed.
\end{proof}

\begin{theorem}
Let $\alpha (s)$ $=\left( s,y(s),z(s),w(s)\right) $ be an admissible curve
parametrized by arclength $s$ in $G_{1}^{4}$. If $\alpha $ lies on the
pseudo-Galilean sphere in $G_{1}^{4}.$ Then, the radius of the
pseudo-sphere satisfies%
\begin{equation}
r^{2}=\left\vert \varepsilon _{1}\rho ^{2}+\varepsilon _{2}\left( \frac{\rho
^{\prime }}{\tau }\right) ^{2}+\varepsilon _{3}\frac{1}{\sigma ^{2}}\left(
\tau \rho +\left( \frac{\rho ^{\prime }}{\tau }\right) ^{\prime }\right)
^{2}\right\vert .  \tag{3.45}
\end{equation}

\begin{proof}
The proof is can be easily obtained by the taking norm of both sides of
(3.44).
\end{proof}
\end{theorem}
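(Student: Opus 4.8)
The plan is to start from the center formula established in the immediately preceding theorem (equation (3.44)), which gives
\begin{equation*}
c(s)-\alpha(s) = \rho\,N + \varepsilon_1\frac{\rho'}{\tau}\,B_1 + \frac{\varepsilon_1\varepsilon_2}{\sigma}\left(\tau\rho + \left(\frac{\rho'}{\tau}\right)'\right)B_2,
\end{equation*}
where $\rho = 1/\kappa$. Since $\alpha$ lies on the pseudo-Galilean sphere, the defining relation (3.37) tells us that $f(s) = \langle c(s)-\alpha(s),\, c(s)-\alpha(s)\rangle_{G_1^4} = \pm r^2$. So the entire proof reduces to computing this single inner product of the vector $c-\alpha$ with itself.

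First I would observe that the coefficient $c_1(s)$ multiplying $T$ vanishes: this follows from equation (3.38), $\langle -T, c-\alpha\rangle = 0$, which forces the $T$-component to be zero (using $\langle T,T\rangle = 1$). Hence $c-\alpha$ has no tangential part, and only the $N$, $B_1$, $B_2$ components survive. Next I would use the orthogonality of the Frenet frame together with the sign relations (2.7), namely $\langle N,N\rangle = \varepsilon_1$, $\langle B_1,B_1\rangle = \varepsilon_2$, $\langle B_2,B_2\rangle = \varepsilon_3$, and all cross terms zero. Expanding $\langle c-\alpha, c-\alpha\rangle$ then immediately gives a diagonal sum
\begin{equation*}
\varepsilon_1\rho^2 + \varepsilon_2\left(\frac{\rho'}{\tau}\right)^2 + \varepsilon_3\left(\frac{1}{\sigma}\left(\tau\rho + \left(\frac{\rho'}{\tau}\right)'\right)\right)^2,
\end{equation*}
after noting that the squared coefficients $\varepsilon_1^2 = \varepsilon_2^2 = 1$ in the $B_1$ and $B_2$ terms reduce the sign bookkeeping. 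Taking the absolute value (because $f = \pm r^2$ and $r^2 > 0$) yields exactly formula (3.45).

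The computation is entirely routine once (3.44) and the frame relations (2.7) are in hand; there is no genuine obstacle beyond careful tracking of the $\varepsilon_i$ signs and verifying that the squared prefactors $\varepsilon_1^2$, $(\varepsilon_1\varepsilon_2)^2$ collapse correctly against $\langle B_1,B_1\rangle = \varepsilon_2$ and $\langle B_2,B_2\rangle = \varepsilon_3$. The only point requiring mild attention is the absolute value: since the pseudo-Galilean metric is indefinite, $\langle c-\alpha, c-\alpha\rangle$ may be negative, so $r^2$ must be identified as the modulus of the diagonal sum rather than the sum itself — which is precisely why the statement carries $\lvert\,\cdot\,\rvert$ around the right-hand side. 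This matches the author's one-line remark that the result follows by taking the norm of both sides of (3.44).
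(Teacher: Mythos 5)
Your argument is correct and is exactly the paper's intended proof: the authors' one-line remark ``take the norm of both sides of (3.44)'' is precisely the expansion of $\langle c-\alpha,\,c-\alpha\rangle_{G_1^4}$ using the orthogonality relations (2.7) and the vanishing of the $T$-component, with the absolute value accounting for $f=\pm r^2$. You have simply written out the details the paper leaves implicit; no difference in approach.
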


\begin{theorem}
Let $\alpha (s)$ $=\left( s,y(s),z(s),w(s)\right) $ be an admissible curve
parametrized by arclength $s$ in $G_{1}^{4}$. If $\alpha $ lies on the
pseudo Galilean sphere in $G_{1}^{4}.$ If $\alpha $ lies on the
pseudo-Galilean sphere $S_{1}^{3}(c,r)$, then the curvature functions of\ $%
\alpha $ satisfy the differential equation%
\begin{equation}
2\varepsilon _{1}\frac{\rho ^{\prime }}{\tau }\left( \tau \rho +\varepsilon
_{2}\left( \frac{\rho ^{\prime }}{\tau }\right) ^{\prime }\right)
+\varepsilon _{3}\left( \frac{1}{\sigma ^{2}}\left( \tau \rho +\left( \frac{%
\rho ^{\prime }}{\tau }\right) ^{\prime }\right) ^{2}\right) ^{\prime }=0. 
\tag{3.46}
\end{equation}
\end{theorem}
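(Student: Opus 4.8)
The plan is to exploit the single fact that the sphere $S_{1}^{3}(c,r)$ is fixed, so that its radius $r$ is constant. Theorem (3.45) already expresses $r^{2}$ purely in terms of the curvature functions $\kappa,\tau,\sigma$ (through $\rho=1/\kappa$); since the left-hand side of (3.45) does not depend on $s$ along $\alpha$, differentiating both sides with respect to arclength must give zero. This is the same mechanism that produced (3.45) from the center formula (3.44) by taking norms, carried one derivative further.

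Concretely, I would set $E:=\varepsilon _{1}\rho ^{2}+\varepsilon _{2}(\rho ^{\prime }/\tau )^{2}+\frac{\varepsilon _{3}}{\sigma ^{2}}\big(\tau \rho +(\rho ^{\prime }/\tau )^{\prime }\big)^{2}$, so that (3.45) reads $r^{2}=|E|$. First I would restrict to a subinterval on which $E\neq 0$; there $E$ is continuous and nonvanishing, hence of constant sign, so that $|E|=\pm E$ and the constancy of $r^{2}$ forces $E^{\prime }=0$. This disposes of the absolute value, which is the only genuinely delicate point in the argument.

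Next I would differentiate $E$ term by term. The first term gives $2\varepsilon _{1}\rho \rho ^{\prime }$, the second gives $2\varepsilon _{2}\frac{\rho ^{\prime }}{\tau }\big(\frac{\rho ^{\prime }}{\tau }\big)^{\prime }$, and the third is left unexpanded as $\varepsilon _{3}\big(\frac{1}{\sigma ^{2}}(\tau \rho +(\rho ^{\prime }/\tau )^{\prime })^{2}\big)^{\prime }$, which already matches the second summand of (3.46). To reach the stated factored form, I would collect the first two contributions by pulling out $2\varepsilon _{1}\frac{\rho ^{\prime }}{\tau }$: writing $2\varepsilon _{1}\rho \rho ^{\prime }=2\varepsilon _{1}\frac{\rho ^{\prime }}{\tau }\cdot \tau \rho $ and rewriting the second contribution with the sign relations $\varepsilon _{2}=-\varepsilon _{1}$ and $\varepsilon _{1}^{2}=\varepsilon _{2}^{2}=\varepsilon _{3}^{2}=1$, the two terms combine into $2\varepsilon _{1}\frac{\rho ^{\prime }}{\tau }\big(\tau \rho +\varepsilon _{2}(\frac{\rho ^{\prime }}{\tau })^{\prime }\big)$. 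Setting $E^{\prime }=0$ then yields exactly (3.46).

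I expect no substantial obstacle: once (3.45) is in hand the proof is essentially a one-line differentiation. The two points requiring care are (i) justifying differentiation through the absolute value, handled by the local constant-sign argument above, and (ii) keeping the signs $\varepsilon _{1},\varepsilon _{2},\varepsilon _{3}$ straight when regrouping the first two derivative terms into the single factored expression appearing in (3.46); here one must use $\varepsilon _{i}^{2}=1$ together with the convention $\varepsilon _{2}=-\varepsilon _{1}$ recorded below (2.9).
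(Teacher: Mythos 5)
Your proposal is correct and takes exactly the same route as the paper, whose entire proof of this theorem is the single sentence that the result follows by differentiating equation (3.45). Your added care with the absolute value (local constant-sign argument) and with the $\varepsilon_{i}$ bookkeeping simply fills in details the paper leaves implicit.
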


\begin{proof}
The proof can be obtained by the differentiation of the equation (3.45).
\end{proof}

\begin{definition}
An admissible curve $\alpha (s)$ in the pseudo Galilean 4-space $G_{1}^{4}$
is called osculating curve if it has no component in the first bi-normal
direction or the second bi-normal direction, in other words $\left\langle
\alpha (s),B_{1}\right\rangle _{G_{1}^{4}}=0$ or $\left\langle \alpha
(s),B_{2}\right\rangle _{G_{1}^{4}}=0$ are satisfied. Therefore, the position vector satisfies the parametric equations, respectively%
\begin{equation}
\alpha (s)=\varrho _{1}(s)T(s)+\varrho _{2}(s)N(s)+\varrho _{3}(s)B_{2}(s) 
\tag{3.47}
\end{equation}%
and 
\begin{equation}
\alpha (s)=\varrho ^{1}(s)T(s)+\varrho ^{2}(s)N(s)+\varrho ^{3}(s)B_{1}(s), 
\tag{3.48}
\end{equation}%
for some differentiable functions $\varrho _{i}(s),\varrho ^{i}(s),1\leq
i\leq 3.$
\end{definition}

\begin{theorem}
Let $\alpha =\alpha (s)$ be osculating curve of type 1 with curvatures $%
\kappa,\tau,\sigma \neq 0$ in the pseudo-Galilean space $%
G_{1}^{4} $. Then, the position vector of osculating curve of type 1 can be
written as 
\begin{equation}
\alpha (s)=\left( s+c_{13}\right) T(s)+c_{14}\left( \frac{\sigma (s)}{\tau
(s)}\right) N(s)+c_{14}(s)B_{2}(s)  \tag{3.49}
\end{equation}%
and the following equation is satisfied 
\begin{equation*}
\left( s+c_{13}\right) \kappa (s)+c_{14}\left( \frac{\sigma (s)}{\tau (s)}%
\right) =0,
\end{equation*}
where $c_{13},c_{14}\in 
\mathbb{R}
.$
\end{theorem}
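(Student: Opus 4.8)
The plan is to mirror the computation already carried out for the general position-vector theorem (3.1)--(3.9), but to start from the osculating-type-1 ansatz (3.47) rather than the full four-term decomposition. Because $\alpha$ is parametrized by arclength, we have $\alpha'(s)=T(s)$, so the entire argument reduces to differentiating (3.47) once and comparing coefficients against the orthogonal frame $\{T,N,B_1,B_2\}$.

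First I would differentiate $\alpha=\varrho_1 T+\varrho_2 N+\varrho_3 B_2$ and insert the Frenet--Serret relations from (2.11), namely $T'=\varepsilon_1\kappa N$, $N'=\varepsilon_2\tau B_1$, and $B_2'=-\varepsilon_2\sigma B_1$. Collecting terms yields
\[
\alpha'=\varrho_1' T+(\varepsilon_1\kappa\varrho_1+\varrho_2')N+\varepsilon_2(\tau\varrho_2-\sigma\varrho_3)B_1+\varrho_3' B_2 .
\]
Equating this with $T$ produces the (overdetermined) scalar system $\varrho_1'=1$, $\varepsilon_1\kappa\varrho_1+\varrho_2'=0$, $\tau\varrho_2-\sigma\varrho_3=0$, and $\varrho_3'=0$.

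The $T$- and $B_2$-components integrate immediately to $\varrho_1=s+c_{13}$ and $\varrho_3=c_{14}$ with $c_{13},c_{14}\in\mathbb{R}$, while vanishing of the $B_1$-component forces $\varrho_2=c_{14}\,(\sigma/\tau)$. Substituting these three expressions back into (3.47) gives exactly the claimed form (3.49) of the position vector. The surviving $N$-component then becomes the compatibility relation $(s+c_{13})\varepsilon_1\kappa+\bigl(c_{14}\,\sigma/\tau\bigr)'=0$, which is precisely the single scalar constraint tying the curvature functions $\kappa,\tau,\sigma$ together that the statement records.

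The only genuine obstacle is that the system is overdetermined: there are four equations in the three unknown functions $\varrho_1,\varrho_2,\varrho_3$, so the real content is verifying consistency rather than solving an ODE. That consistency is exactly what the final curvature relation expresses, and beyond careful tracking of the signs $\varepsilon_i$ appearing in the Frenet--Serret matrix there is no further analytic difficulty.
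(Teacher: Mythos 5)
Your proof follows the paper's argument exactly: differentiate the type-1 ansatz (3.47), insert the Frenet--Serret relations (2.11), equate coefficients against $\{T,N,B_1,B_2\}$, and integrate the resulting system to get $\varrho_1=s+c_{13}$, $\varrho_3=c_{14}$ constant, and $\varrho_2=c_{14}\,\sigma/\tau$. One remark: the compatibility relation you obtain from the $N$-component, $(s+c_{13})\varepsilon_1\kappa+\bigl(c_{14}\,\sigma/\tau\bigr)'=0$, carries a derivative on $\sigma/\tau$ that the theorem's displayed relation omits; your version is the one that actually follows from $\varrho_1\varepsilon_1\kappa+\varrho_2'=0$, so the discrepancy lies in the paper's statement rather than in your derivation.
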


\begin{proof}
From definition osculating curve of type 1 with curvatures $\kappa,\tau,\sigma\neq 0$ given in $G_{1}^{4},$ by differentiating Eq. (3.47)
with respect to arc length parameter $s$ and using the Serret-Frenet
equations (2.11), we get%
\begin{equation*}
\alpha ^{\prime }(s)=\varrho _{1}^{\prime }(s)T+(\varrho _{1}(s)\varepsilon
_{1}\kappa +\varrho _{2}^{\prime }(s))N+\varepsilon _{2}(\varrho _{2}(s)\tau
-\varrho _{3}(s)\sigma )B_{1}+\varrho _{3}^{\prime }(s)B_{2}
\end{equation*}%
it follows that 
\begin{equation*}
\varrho _{1}^{\prime }(s)=1;\varrho _{1}(s)\kappa +\varepsilon _{1}\varrho
_{2}^{\prime }(s)=0;\varrho _{2}(s)\tau -\varrho _{3}(s)\sigma
=0;\varepsilon _{3}\varrho _{3}^{\prime }(s)=0.
\end{equation*}%

By solving previous equations together, we get 
\begin{equation*}
\varrho _{1}(s)=s+c_{13};\varrho _{3}(s)=c_{14}=const.;\varrho _{2}(s)=c_{14}%
\frac{\sigma }{\tau }.
\end{equation*}%

Also, from the second equation, we can write the following equation 
\begin{equation*}
\left( s+c_{13}\right) \kappa (s)+c_{14}\left( \frac{\sigma (s)}{\tau (s)}%
\right) =0.
\end{equation*}
\end{proof}

\begin{theorem}
Let $\alpha =\alpha (s)$ be osculating curve of type 2 with curvatures $%
\kappa ,\tau ,\sigma $ in the pseudo-Galilean space $G_{1}^{4}$. Then, the
position vector of osculating curve of type 2 can be written as

1) for $\sigma (s)\neq 0,$ 
\begin{equation}
\alpha (s)=\left( s+c_{15}\right) T(s)-\left( \varepsilon _{1}\int \left(
s+c_{15}\right) \kappa \left( s\right) ds+c_{16}.\right) N(s).  \tag{3.50}
\end{equation}%

2) for $\sigma (s)=0$ and $\tau ,\kappa =$constant$,$%
\begin{equation}
\alpha (s)=\left( s+c_{15}\right) T(s)+\left( -\varepsilon _{2}\frac{\varrho
_{3}^{\prime }}{\tau }\right) N(s)+\varrho _{3}B_{1}(s)  \tag{3.51}
\end{equation}%
where 
\begin{equation*}
\varrho _{3}(s)=a_{1}\sin \tau s+a_{2}\cos \tau s+\varepsilon
_{1}\varepsilon _{2}\frac{\kappa }{\tau }(s+c_{15}),c_{15},a_{1},a_{2}\in 
\mathbb{R}
.
\end{equation*}
\end{theorem}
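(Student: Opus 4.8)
The plan is to reproduce the scheme used for the osculating curve of type~1, now starting from the type~2 ansatz (3.48), namely $\alpha(s)=\varrho^{1}(s)T+\varrho^{2}(s)N+\varrho^{3}(s)B_{1}$, in which the $B_{2}$-component is absent by the very definition of an osculating curve of type~2. First I would differentiate this expression with respect to the arclength parameter $s$ and substitute the Frenet--Serret equations (2.11), collecting the terms along each frame vector. Since $\alpha$ is unit speed one has $\alpha^{\prime}=T$, so matching the $T,N,B_{1},B_{2}$ components yields the scalar system
\begin{equation*}
(\varrho^{1})^{\prime}=1,\qquad \varrho^{1}\varepsilon_{1}\kappa+(\varrho^{2})^{\prime}-\varrho^{3}\varepsilon_{2}\tau=0,
\end{equation*}
\begin{equation*}
\varrho^{2}\varepsilon_{2}\tau+(\varrho^{3})^{\prime}=0,\qquad \varrho^{3}\varepsilon_{3}\sigma=0.
\end{equation*}
The first equation gives $\varrho^{1}(s)=s+c_{15}$ at once, and the remaining three are then resolved according to the behaviour of $\sigma$.

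For part~(1), where $\sigma(s)\neq0$, the last equation forces $\varrho^{3}\equiv0$. Feeding $\varrho^{3}=0$ into the normal ($N$) equation reduces it to $(\varrho^{2})^{\prime}=-\varepsilon_{1}(s+c_{15})\kappa$, and a single integration produces
\begin{equation*}
\varrho^{2}(s)=-\Bigl(\varepsilon_{1}\int (s+c_{15})\kappa(s)\,ds+c_{16}\Bigr),
\end{equation*}
which, inserted back into the ansatz, is exactly (3.50). Thus this branch is governed entirely by $\kappa$, the two bi-normal directions dropping out.

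For part~(2), where $\sigma(s)=0$ and $\kappa,\tau$ are constant, the relation $\varrho^{3}\varepsilon_{3}\sigma=0$ is satisfied identically, so $\varrho^{3}$ need no longer vanish. Here I would use the $B_{1}$-component equation to write $\varrho^{2}=-\varepsilon_{2}(\varrho^{3})^{\prime}/\tau$ and substitute this into the $N$-component equation; because $\tau$ is constant, this collapses the first-order coupling into the single second-order linear ODE
\begin{equation*}
(\varrho^{3})^{\prime\prime}+\tau^{2}\varrho^{3}=\varepsilon_{1}\varepsilon_{2}\tau\kappa\,(s+c_{15}).
\end{equation*}
Its homogeneous part contributes $a_{1}\sin\tau s+a_{2}\cos\tau s$, while the affine right-hand side admits the particular solution $\varepsilon_{1}\varepsilon_{2}(\kappa/\tau)(s+c_{15})$; superposing the two gives the stated $\varrho^{3}(s)$, and back-substitution into $\varrho^{2}=-\varepsilon_{2}(\varrho^{3})^{\prime}/\tau$ together with $\varrho^{1}=s+c_{15}$ yields (3.51).

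The differentiation and the bookkeeping of the signs $\varepsilon_{1},\varepsilon_{2},\varepsilon_{3}$ are routine; the one genuinely computational step, and the place where care is required, is the reduction in part~(2) to the constant-coefficient oscillator equation and the correct identification of its particular solution, since any slip in the factor $\varepsilon_{1}\varepsilon_{2}\kappa/\tau$ would propagate into both $\varrho^{3}$ and $\varrho^{2}$.
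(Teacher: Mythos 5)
Your proposal is correct and follows essentially the same route as the paper: differentiate the type~2 ansatz (3.48), match the $T,N,B_{1},B_{2}$ components against $\alpha'=T$ via (2.11), force $\varrho^{3}=0$ when $\sigma\neq 0$, and for $\sigma=0$ with constant $\kappa,\tau$ reduce to the same oscillator equation $(\varrho^{3})''+\tau^{2}\varrho^{3}=\varepsilon_{1}\varepsilon_{2}\tau\kappa(s+c_{15})$. The coefficient system, the case split, and the particular solution all coincide with the paper's argument.
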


\begin{proof}
From definition osculating curve of type 2 with curvatures $\kappa,\tau,\sigma\neq 0$ given in $G_{1}^{4},$ by differentiating Eq. (3.48)
with respect to arc length parameter $s$ and using the Serret-Frenet
equations (2.11), we get%
\begin{equation*}
T=\varrho _{1}^{\prime }T+(\varrho _{1}\varepsilon _{1}\kappa +\varrho
_{2}^{\prime }-\varepsilon _{2}\tau \varrho _{3})N+(\varrho _{3}{}^{\prime
}+\varepsilon _{2}\varrho _{2}\tau )B_{1}+\varepsilon _{3}\varrho _{3}\sigma
B_{2},
\end{equation*}%
by using previous equality and for $\sigma \neq 0$ similar to the previous
proof, the following functions are obtained 
\begin{equation*}
\varrho _{1}(s)=s+c_{15};\varrho _{3}(s)=0;\varrho _{2}(s)=-\varepsilon
_{1}\int \left( s+c_{15}\right) \kappa \left( s\right) ds+c_{16}.
\end{equation*}%

Also, for $\sigma (s)=0$ and $\tau ,\kappa =$constant$,$ we get 
\begin{equation*}
\varrho _{3}^{\prime \prime }+\tau ^{2}\varrho _{3}=\tau \kappa \varepsilon
_{1}\varepsilon _{2}(s+c_{15})
\end{equation*}%
and solving the previous differential equation, we have 
\begin{equation*}
\varrho _{3}(s)=a_{1}\sin \tau s+a_{2}\cos \tau s+\varepsilon
_{1}\varepsilon _{2}\frac{\kappa }{\tau }(s+c_{15}),c_{15},a_{1},a_{2}\in 
\mathbb{R}%
\end{equation*}%
and from the equation $\varepsilon _{2}\left( \varrho _{3}(s)^{\prime
}+\varepsilon _{2}\varrho _{2}(s)\tau \right) =0$, we can obtain the
equation $\varrho _{2}(s)$.
\end{proof}

\begin{definition}
An admissible curve $\alpha (s)$ in the pseudo Galilean 4-space $G_{1}^{4}$
is called rectifying curve if it has no component in the normal direction and $\left\langle \alpha (s),N\right\rangle _{G_{1}^{4}}=0$. Then, the position vector satisfies the parametric equation
\begin{equation}
\alpha (s)=\chi _{1}(s)T(s)+\chi _{2}(s)B_{1}(s)+\chi _{3}(s)B_{2}(s),
\tag{3.52}
\end{equation}%
for some differentiable functions $\chi _{i}(s),1\leq i\leq 3.$
\end{definition}

\begin{theorem}
Let $\alpha =\alpha (s)$ be rectifying curve with curvatures $\kappa
(s),\tau (s),\sigma (s)\neq 0$ in the pseudo-Galilean space $G_{1}^{4}$.
Then, the position vector of rectifying curve can be written as 
\begin{equation}
\alpha (s)=\left( s+c_{15}\right) T(s)+\varepsilon _{1}\varepsilon
_{2}\left( s+c_{15}\right) \frac{\kappa }{\tau }B_{1}(s)+\frac{\varepsilon
_{1}}{\sigma }\left( \left( s+c_{15}\right) \frac{\kappa }{\tau }\right)
^{\prime }B_{2}(s),  \tag{3.53}
\end{equation}
where $c_{15}\in 
\mathbb{R}
.$

\begin{proof}
From definition rectifying curve with curvatures $\kappa (s),\tau (s),\sigma
(s)\neq 0$ given in $G_{1}^{4},$ by differentiating Eq. (3.52) with respect
to arc length parameter $s$ and using the Serret-Frenet equations (2.11), we
get%
\begin{equation*}
T=\chi _{1}^{\prime }T+(\chi _{1}\varepsilon _{1}\kappa -\varepsilon
_{2}\tau \chi _{2})N+(\chi _{2}{}^{\prime }-\varepsilon _{2}\chi _{2}\sigma
)B_{1}+(\varepsilon _{3}\chi _{2}\sigma +\chi _{3}^{\prime })B_{2}.
\end{equation*}%

Hence, 
\begin{equation}
\chi _{1}^{\prime }=1;\chi _{1}\kappa -\varepsilon _{2}\varepsilon
_{1}\tau \chi _{2}=0;\varepsilon _{2}\chi _{2}^{\prime }-\chi
_{2})\sigma =0;\chi _{2}\sigma +\varepsilon _{3}\chi _{3}^{\prime
}=0.  \tag{3.54}
\end{equation}%

By solving Eq. (3.54) together, we get 
\begin{equation*}
\chi _{1}(s)=s+c_{15};\varrho _{3}(s)=\frac{\varepsilon _{1}}{\sigma }\left(
\left( s+c_{15}\right) \frac{\kappa }{\tau }\right) ^{\prime };\varrho
_{2}(s)=\varepsilon _{1}\varepsilon _{2}\left( s+c_{15}\right) \frac{\kappa 
}{\tau }.
\end{equation*}
\end{proof}

\begin{definition}
An admissible curve $\alpha (s)$ in the pseudo Galilean 4-space $G_{1}^{4}$
is called normal curve if it has no component in the tangent direction and $\left\langle \alpha (s),T\right\rangle _{G_{1}^{4}}=0$. Then, the position
vector satisfies the parametric equation%
\begin{equation}
\alpha (s)=n_{1}(s)N(s)+n_{2}(s)B_{1}(s)+n_{3}(s)B_{2}(s),  \tag{3.55}
\end{equation}%
for some differentiable functions $n_{i}(s),1\leq i\leq 3.$

\begin{theorem}
There is no normal curve with curvatures $\kappa (s),\tau (s),\sigma (s)\neq
0$ in the pseudo-Galilean space $G_{1}^{4}.$

\begin{proof}
If we consider the equation in (3.55) and by taking the derivative this
equation, we can easily see that 
\begin{equation*}
\left\langle \alpha (s),T\right\rangle _{G_{1}^{4}}=\varepsilon
_{1}(n_{1}^{\prime }-\varepsilon _{2}\tau n_{2})^{2}+\varepsilon _{2}\left(
\varepsilon _{2}\tau n_{1}+n_{1}^{\prime }-\varepsilon _{2}\sigma
n_{3}\right) ^{2}+\varepsilon _{3}\left( \varepsilon _{3}\sigma
n_{2}+n_{3}^{\prime }\right)^{2} \neq 0.
\end{equation*}%

Therefore, from the definition, we can say that there is no normal curve in $%
G_{1}^{4}$.
\end{proof}
\end{theorem}
\end{definition}
\end{theorem}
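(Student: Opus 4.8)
The plan is to exploit the defining condition of a rectifying curve together with the Frenet--Serret equations (2.11) to convert the problem into a solvable first-order linear system for the three coefficient functions. Starting from the rectifying decomposition (3.52), $\alpha(s)=\chi_{1}(s)T(s)+\chi_{2}(s)B_{1}(s)+\chi_{3}(s)B_{2}(s)$, I would differentiate once with respect to the arc-length parameter $s$ and substitute $T'=\varepsilon_{1}\kappa N$, $B_{1}'=-\varepsilon_{2}\tau N+\varepsilon_{3}\sigma B_{2}$, and $B_{2}'=-\varepsilon_{2}\sigma B_{1}$ from the matrix in (2.11). Since $\alpha$ is unit speed, the left-hand side equals $T$, so comparing the coefficients of the mutually orthogonal frame $\{T,N,B_{1},B_{2}\}$ yields four scalar equations in $\chi_{1},\chi_{2},\chi_{3}$:
\begin{equation*}
\chi_{1}'=1,\quad \chi_{1}\varepsilon_{1}\kappa-\varepsilon_{2}\tau\chi_{2}=0,\quad \chi_{2}'-\varepsilon_{2}\sigma\chi_{3}=0,\quad \varepsilon_{3}\sigma\chi_{2}+\chi_{3}'=0.
\end{equation*}

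Next I would solve this system sequentially. The $T$-component gives $\chi_{1}'=1$, hence $\chi_{1}(s)=s+c_{15}$ with $c_{15}\in\mathbb{R}$. The $N$-component is purely algebraic and encodes that $\langle\alpha,N\rangle$ remains zero, consistent with the rectifying hypothesis; solving it for $\chi_{2}$ and using the identity $1/\varepsilon_{2}=\varepsilon_{2}$ produces $\chi_{2}=\varepsilon_{1}\varepsilon_{2}(s+c_{15})\tfrac{\kappa}{\tau}$, which is exactly the $B_{1}$-coefficient in (3.53). The $B_{1}$-component then gives $\chi_{3}=\tfrac{1}{\varepsilon_{2}\sigma}\chi_{2}'=\tfrac{\varepsilon_{1}}{\sigma}\bigl((s+c_{15})\tfrac{\kappa}{\tau}\bigr)'$, matching the $B_{2}$-coefficient in (3.53). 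Substituting the three functions back into (3.52) reproduces the claimed position vector.

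Finally, I would observe that the remaining $B_{2}$-component equation $\varepsilon_{3}\sigma\chi_{2}+\chi_{3}'=0$ does not introduce a new unknown; instead it records the compatibility relation that the curvatures $\kappa,\tau,\sigma$ of a genuine rectifying curve must satisfy, and I would check that the derived $\chi_{2},\chi_{3}$ are consistent with it.

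The main obstacle is not analytic but purely one of sign and metric bookkeeping. There is no nontrivial integration beyond the trivial $\chi_{1}'=1$, so the delicate point is the correct extraction of each frame coefficient from (2.11) and the careful handling of the causal-character factors $\varepsilon_{1},\varepsilon_{2},\varepsilon_{3}$ (in particular $1/\varepsilon_{i}=\varepsilon_{i}$), since the pseudo-Galilean Frenet equations are sign-asymmetric relative to the Euclidean ones. I would guard against error by re-deriving each coefficient directly from the structure matrix rather than from analogy with the Euclidean case.
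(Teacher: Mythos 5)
Your proposal is correct and follows essentially the same route as the paper: differentiate the decomposition (3.52), substitute the Frenet--Serret equations (2.11), equate coefficients of the frame $\{T,N,B_{1},B_{2}\}$, and solve the resulting system sequentially to obtain $\chi_{1}=s+c_{15}$, $\chi_{2}=\varepsilon_{1}\varepsilon_{2}(s+c_{15})\tfrac{\kappa}{\tau}$, and $\chi_{3}=\tfrac{\varepsilon_{1}}{\sigma}\bigl((s+c_{15})\tfrac{\kappa}{\tau}\bigr)'$. If anything, your version is the cleaner one: your four scalar equations are stated correctly where the paper's display has typographical slips (it writes $\chi_{2}$ for $\chi_{3}$ in the $B_{1}$-coefficient and garbles the third equation of (3.54)), and your remark that the leftover $B_{2}$-equation $\varepsilon_{3}\sigma\chi_{2}+\chi_{3}'=0$ is a compatibility condition on $\kappa,\tau,\sigma$ makes explicit a constraint the paper passes over silently.
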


\subsection*{Acknowledgement}
The authors would like to thank referees for their valuable suggestions.

\end{document}